\newtheorem{theorem}{Theorem}
\newtheorem{lemma}[theorem]{Lemma}
\newtheorem{corollary}[theorem]{Corollary}
\newtheorem{proposition}[theorem]{Proposition}
\newtheorem{remark}[theorem]{Remark}
\newcommand{\R}{\mathbb{R}}
\newcommand{\inter}{\cap}
\newcommand{\union}{\cup}
\title{On the Convergence Rate of Decomposable Submodular Function Minimization}
\author{
Robert Nishihara,\,\, Stefanie Jegelka,\,\, Michael I.~Jordan \\
Electrical Engineering and Computer Science\\
University of California\\
Berkeley, CA 94720 \\
\texttt{\{rkn,stefje,jordan\}@eecs.berkeley.edu} \\
}
\begin{document}

\maketitle

\begin{abstract}
%
Submodular functions describe a variety of discrete problems in machine learning, signal processing, and computer vision.
However, minimizing submodular functions poses a number of algorithmic challenges.
Recent work introduced an easy-to-use, parallelizable algorithm for minimizing submodular functions that decompose as the sum of ``simple'' submodular functions.
Empirically, this algorithm performs extremely well, but no theoretical analysis was given.
In this paper, we show that the algorithm converges linearly, and we provide upper and lower bounds on the rate of convergence.
Our proof relies on the geometry of submodular polyhedra and draws on results from spectral graph theory.
\end{abstract}

\section{Introduction}

A large body of recent work demonstrates that many discrete problems in machine learning can be phrased as the optimization of a submodular set function \cite{Bach2013Learning}. 
A set function $F\colon 2^V \to \R$ over a ground set $V$ of $N$ elements is \emph{submodular} if the inequality $F(A) + F(B) \geq F(A \union B) + F(A \inter B)$ holds for all subsets $A, B \subseteq V$. 
Problems like clustering \cite{Narasimhan2007Local}, structured sparse variable selection \cite{bach11}, 
MAP inference with higher-order potentials \cite{kohli09}, 
and corpus extraction problems \cite{lin11corp} can be reduced to the problem of submodular function minimization (SFM), that is
\begin{equation}
  \label{eq:setmin}
  \min_{A \subseteq V} F(A). \tag{P1}
\end{equation}
Although SFM is solvable in polynomial time, existing algorithms can be inefficient on large-scale problems. 
For this reason, the development of scalable, parallelizable algorithms 
has been an active area of research \cite{jegelka11fast,Jegelka2013Reflection,kolmogorov12minimizing,stobbe10efficient}. 
Approaches to solving Problem~\eqref{eq:setmin} are either based on combinatorial optimization or on convex optimization via the \emph{Lov\'asz extension}. 

Functions that occur in practice are usually not arbitrary and frequently possess additional exploitable structure.
For example, a number of submodular functions admit specialized algorithms that solve Problem~\eqref{eq:setmin} very quickly.
Examples include cut functions on certain kinds of graphs, concave functions of the cardinality $|A|$, and functions counting joint ancestors in trees.
 We will use the term \emph{simple} to refer to functions $F$ for which we have a fast subroutine for minimizing $F+s$, where $s \in \mathbb R^N$ is any modular function.
 We treat these subroutines as black boxes.
 Many commonly occuring submodular functions (for example, graph cuts, hypergraph cuts, MAP inference with higher-order potentials \cite{Fix13structured,kohli09,Vicente09joint}, co-segmentation \cite{hochbaum09efficient}, certain structured-sparsity inducing functions \cite{Jenatton11prox}, covering functions \cite{stobbe10efficient}, and combinations thereof) can be expressed as a sum
\begin{equation}
  \label{eq:composite}
  F(A) = \sum\nolimits_{r=1}^R F_r(A)
\end{equation}
of simple submodular functions.
Recent work demonstrates that this structure offers important practical benefits \cite{Jegelka2013Reflection,kolmogorov12minimizing,stobbe10efficient}. 
For instance, it admits iterative algorithms that minimize each $F_r$ separately and combine the results in a straightforward manner (for example, dual decomposition). 

In particular, it has been shown that the minimization of decomposable functions can be rephrased as a \emph{best-approximation problem}, the problem of finding the closest points in two convex sets \cite{Jegelka2013Reflection}. 
This formulation brings together SFM and classical projection methods
and yields empirically fast, parallel, and easy-to-implement algorithms. 
In these cases, the performance of projection methods depends heavily on the specific geometry of the problem at hand and is not well understood in general.
Indeed, while \citet{Jegelka2013Reflection} show good empirical results,
the analysis of this alternative approach to SFM was left as an open problem.



\textbf{Contributions.} 
In this work, we study the geometry of the submodular best-approximation problem and ground the prior empirical results in theoretical guarantees. 
We show that SFM via alternating projections, or block coordinate descent, converges at a \emph{linear rate}. We show that this rate holds for the best-approximation problem, relaxations of SFM, and the original discrete problem. 
More importantly, we prove upper and lower bounds on the worst-case rate of convergence. 
Our proof relies on analyzing angles between the polyhedra associated with submodular functions and draws on results from spectral graph theory.
It offers insight into the geometry of submodular polyhedra that may be beneficial beyond the analysis of projection algorithms.


\textbf{Submodular minimization.}
The first polynomial-time algorithm for minimizing arbitrary submodular functions was a consequence of the ellipsoid method \cite{Grotschel1981Ellipsoid}. 
Strongly and weakly polynomial-time combinatorial algorithms followed \cite{Mccormick2006}. 
The current fastest running times are $O(N^5\tau_1 + N^6)$ \cite{orlin09sfm} in general and $O((N^4\tau_1 + N^5)\log F_{\max})$ for integer-valued functions \cite{iwata03scaling}, where $F_{\max} = \max_A|F(A)|$ and $\tau_1$ is the time required to evaluate $F$. 
Some work has addressed decomposable functions \cite{Jegelka2013Reflection,kolmogorov12minimizing,stobbe10efficient}.
The running times in 
\cite{kolmogorov12minimizing} apply to integer-valued functions and  range from $O((N+R)^2\log F_{\max})$ for cuts to $O((N+Q^2R)(N + Q^2R + QR\tau_2)\log F_{\max})$, where $Q \leq N$ is the maximal cardinality of the support of any $F_r$, and $\tau_2$ is the time required to minimize a simple function.
%
\citet{stobbe10efficient} use a convex optimization approach based on Nesterov's smoothing technique. They achieve a (sublinear) convergence rate of $O(1/k)$ for the discrete SFM problem. Their results and our results do not rely on the function being integral.

\textbf{Projection methods.}
Algorithms based on alternating projections between convex sets (and related methods such as the Douglas--Rachford algorithm) have been studied extensively for solving convex feasibility and best-approximation problems 
\cite{Bauschke1994Dykstra,Bauschke1996Projection,Beck2013Convergence,Deutsch2001Best,Deutsch1994Rate,Gubin1967Method,Halperin1962Product,Tseng1997Alternating,VonNeumann1950Functional}. 
See \citet{Deutsch1992Method} for a survey of applications.
In the simple case of subspaces, the convergence of alternating projections has been characterized in terms of the Friedrichs angle $c_F$ between the subspaces \cite{Bauschke1996Projection,Bauschke2014Rate}.
There are often good ways to compute $c_F$ (see 
Lemma~\ref{lem:angles_and_singular_values}), which allow us to obtain concrete linear rates of convergence for subspaces.
The general case of alternating projections between arbitrary convex sets is less well understood. 
\citet{Bauschke1993Convergence} give a general condition for the linear convergence of alternating projections in terms of the value $\kappa_*$ (defined in Section~\ref{sec:condition_linear_convergence}).
However, except in very limited cases, it is unclear how to compute or even bound $\kappa_*$. 
While it is known that $\kappa_* < \infty$ for polyhedra \cite[Corollary~5.26]{Bauschke1996Projection}, the rate may be arbitrarily slow, and the challenge is to bound the linear rate away from one. 
We are able to give a specific {\em uniform} linear rate for the submodular polyhedra that arise in SFM.

Although both $\kappa_*$ and $c_F$ are useful quantities for understanding the convergence of projection methods, they largely have been studied independently of one another. 
In this work, we relate these two quantities for 
polyhedra, thereby obtaining some of the generality of $\kappa_*$ along with the computability of $c_F$. 
To our knowledge, we are the first to relate $\kappa_*$ and $c_F$ outside the case of subspaces. 
We feel that this connection may be useful beyond the context of submodular polyhedra.



\subsection{Background} 
Throughout this paper, we assume that $F$ is a sum of simple submodular functions $F_1, \ldots, F_R$ and that $F(\emptyset) = 0$. Points $s \in \mathbb R^N$ can be identified with (modular) set functions via $s(A) = \sum_{n \in A}s_n$.
 The \emph{base polytope} of $F$ is defined as the set of all modular functions that are dominated by $F$ and that sum to $F(V)$,
\begin{equation*}
  \label{eq:1}
  B(F) = \{s \in \R^N \,|\, s(A) \leq F(A)\; \text{ for all } A \subseteq V \text{ and } s(V) = F(V) \}. 
\end{equation*}
The \emph{Lov\'asz extension} $f \colon \mathbb R^N \to \mathbb R$ of $F$ can be written as the support function of the base polytope, that is~$f(x) = \max_{s \in B(F)} s^\top x$.
Even though $B(F)$ may have exponentially many faces, the extension $f$ can be evaluated in $O(N \log N)$ time \cite{Edmonds70certain}.
The discrete SFM problem~\eqref{eq:setmin} can be relaxed to the non-smooth convex optimization problem
\begin{equation}
  \label{eq:relax}
  \min_{x \in [0,1]^N} f(x) \;\; \equiv \;\; \min_{x \in [0,1]^N} \sum_{r=1}^R f_r(x), \tag{P2}
\end{equation}
where $f_r$ is the Lov\'asz extension of $F_r$. This relaxation is exact -- rounding an optimal continuous solution yields the indicator vector of an optimal discrete solution. The formulation in Problem~\eqref{eq:relax} is amenable to dual decomposition \cite{Komodakis11mrf} and smoothing techniques \cite{stobbe10efficient}, but suffers from the non-smoothness of $f$ \cite{Jegelka2013Reflection}.
Alternatively, we can formulate a proximal version of the problem
\begin{equation}
  \label{eq:prox}
  \min_{x \in \R^N} f(x) + \tfrac{1}{2}\|x\|^2 \;\; \equiv \;\; \min_{x \in \R^N} \sum_{r=1}^R (f_r(x) + \tfrac{1}{2R}\|x\|^2) . \tag{P3}
\end{equation}
By thresholding the optimal solution of Problem~\eqref{eq:prox} at zero, we recover the indicator vector of an optimal discrete solution \cite{fujishige11minnorm}, \cite[Proposition~8.4]{Bach2013Learning}.
\begin{lemma}{\cite{Jegelka2013Reflection}}\label{lem:duals}
  The dual of the right-hand side of Problem~\eqref{eq:prox} is the best-approximation problem
  \begin{equation}
    \label{eq:dual_problem}
    \min\; \|a - b\|^2 \quad a \in \mathcal{A},\;\; b \in \mathcal{B}, \tag{P4}
  \end{equation}
  where $\mathcal{A} = \{ (a_1, \ldots, a_R) \in \mathbb{R}^{NR} \mid \sum_{r=1}^Ra_r = 0\}$ and $\mathcal B = B(F_1) \times \cdots \times B(F_R)$.
\end{lemma}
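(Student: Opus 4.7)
My plan is to derive the Lagrangian dual of the right-hand side of (P3) by variable splitting. Because the $R$ summands are coupled only through the common argument $x$, I would introduce a separate copy $x_r \in \R^N$ for each $r$ together with the consensus constraints $x_r = x$, and then attach a dual variable $a_r \in \R^N$ to each constraint. The resulting Lagrangian is
\[
L(x, \{x_r\}, \{a_r\}) = \sum_{r=1}^R \Bigl( f_r(x_r) + \tfrac{1}{2R}\|x_r\|^2 - \langle a_r,\, x_r - x\rangle \Bigr).
\]
Minimizing over the free variable $x$ is linear in $x$ with coefficient $\sum_r a_r$, so the infimum is $-\infty$ unless $\sum_r a_r = 0$. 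This already produces the constraint $a=(a_1,\dots,a_R)\in \mathcal{A}$.

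Next, I would minimize $L$ over each $x_r$ separately, using the support-function representation $f_r(x_r) = \max_{s_r \in B(F_r)} \langle s_r, x_r\rangle$. Each summand then becomes the saddle-point expression
\[
\min_{x_r \in \R^N}\;\max_{s_r \in B(F_r)}\; \langle s_r - a_r,\, x_r\rangle + \tfrac{1}{2R}\|x_r\|^2.
\]
The key technical step, and essentially the only non-routine part of the argument, is to interchange $\min_{x_r}$ and $\max_{s_r}$. This is justified by Sion's minimax theorem: $B(F_r)$ is nonempty, convex, and compact; the objective is (strongly) convex in $x_r$ and linear, hence concave, in $s_r$. After the swap, the inner unconstrained quadratic minimization is solved in closed form by $x_r = R(a_r - s_r)$, which plugs back to yield the value $-\tfrac{R}{2}\|a_r - s_r\|^2$.

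Collecting the $R$ terms and rewriting in block notation with $b = (s_1,\dots,s_R)\in \mathcal{B}$, the dual problem becomes
\[
\max_{a \in \mathcal{A},\, b \in \mathcal{B}}\; -\tfrac{R}{2}\|a - b\|^2 \;=\; -\tfrac{R}{2}\,\min_{a \in \mathcal{A},\, b \in \mathcal{B}}\;\|a - b\|^2,
\]
which is exactly Problem (P4) up to the positive multiplicative constant $R/2$. I expect the only real obstacles to be (i) the min--max exchange, which I handle with Sion's theorem as above, and (ii) a brief verification that strong duality holds so that the computed quantity really is the dual; this is immediate because the coupling constraints are affine and the primal is strongly convex in each $x_r$, so Slater's condition is trivial.
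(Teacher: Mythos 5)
Your derivation is correct: the consensus-splitting Lagrangian, the support-function representation $f_r(x_r)=\max_{s_r\in B(F_r)}\langle s_r,x_r\rangle$, the min--max interchange (valid because $B(F_r)$ is nonempty, convex, and compact), and the closed-form inner minimization $x_r=R(a_r-s_r)$ all check out, yielding the dual $\max_{a\in\mathcal A,\,b\in\mathcal B}-\tfrac{R}{2}\|a-b\|^2$, which is Problem~\eqref{eq:dual_problem} up to a harmless positive constant. Note that the paper does not reprove this lemma but imports it from \citet{Jegelka2013Reflection}; your argument is essentially the standard dual-decomposition derivation given there, so there is nothing to reconcile.
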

Lemma~\ref{lem:duals} 
implies that we can minimize a decomposable submodular function by solving Problem~\eqref{eq:dual_problem}, which means finding the closest points between the subspace $\mathcal{A}$ and the product $\mathcal{B}$ of base polytopes. 
Projecting onto $\mathcal A$ is straightforward because $\mathcal A$ is a subspace. 
Projecting onto $\mathcal B$ amounts to projecting onto each $B(F_r)$ separately. The projection $\Pi_{B(F_r)}z$ of a point $z$ onto $B(F_r)$ may be solved by minimizing $F_r - z$ \cite{Jegelka2013Reflection}.
We can compute these projections easily because each $F_r$ is simple. 

Throughout this paper, we use $\mathcal A$ and $\mathcal B$ to refer to the specific polyhedra defined in Lemma~\ref{lem:duals} (which live in $\mathbb R^{NR}$) and $P$ and $Q$ to refer to general polyhedra (sometimes arbitrary convex sets) in $\mathbb R^D$.
Note that the polyhedron $\mathcal B$ depends on the submodular functions $F_1, \ldots, F_R$, but we omit the dependence to simplify our notation. Our bound will be uniform over all submodular functions.

\section{Algorithm and Idea of Analysis} \label{sec:alg_and_idea}

A popular class of algorithms for solving best-approximation problems are projection methods \cite{Bauschke1996Projection}.
The most straightforward approach uses alternating projections (AP) or block coordinate descent.
Start with any point $a_0 \in \mathcal{A}$, and 
inductively generate two sequences via $b_k = \Pi_{\mathcal{B}}a_k$ and $a_{k+1} = \Pi_{\mathcal{A}}b_k$. Given the nature of $\mathcal A$ and $\mathcal B$, this algorithm is easy to implement and use in our setting, and it solves Problem~\eqref{eq:dual_problem} \cite{Jegelka2013Reflection}.
This is the algorithm that we will analyze.

The sequence $(a_k,b_k)$ will eventually converge to an optimal pair $(a_*,b_*)$.
We say that AP converges linearly with rate $\alpha < 1$ if $\|a_k-a_*\| \le C_1\alpha^k$ and $\|b_k-b_*\| \le C_2\alpha^k$ for all $k$ and for some constants $C_1$ and $C_2$. Smaller values of $\alpha$ are better.

\textbf{Analysis: Intuition.}
%
%
We will provide a detailed analysis of the convergence of AP for the polyhedra $\mathcal A$ and $\mathcal B$. 
To motivate our approach, we first provide some intuition with the following much-simplified setup.
Let $U$ and $V$ be one-dimensional subspaces
spanned by the unit vectors $u$ and $v$ respectively. 
In this case, it is known that AP converges linearly with rate $\cos^2\theta$, where $\theta \in [0,\frac{\pi}{2}]$ is the angle such that $\cos\theta = u^{\top}v$. 
The smaller the angle, the slower the rate of convergence. 
For subspaces $U$ and $V$ of higher dimension, the relevant generalization of the ``angle'' between the subspaces is the 
\emph{Friedrichs angle} \cite[Definition~9.4]{Deutsch2001Best}, whose cosine is given by
\begin{equation} \label{eq:friedrichs_angle}
  c_F(U,V) = \sup \left\{ u^\top v \,|\, u \in U \cap (U \cap V)^{\perp}, v \in V \cap (U \cap V)^{\perp}, \|u\| \le 1, \|v\| \le 1 \right\} .
\end{equation}
In finite dimensions, $c_F(U,V) < 1$. 
In general, when $U$ and $V$ are subspaces of arbitrary dimension, AP will converge linearly with rate $c_F(U,V)^2$ \cite[Theorem~9.8]{Deutsch2001Best}. If $U$ and $V$ are \emph{affine spaces}, AP still converges linearly with rate $c_F(U-u,V-v)^2$, where $u \in U$ and $v \in V$.

We are interested in rates for \emph{polyhedra} $P$ and $Q$, which we define as the intersection of finitely many halfspaces. 
We generalize the preceding results by considering all pairs $(P_x, Q_y)$ of faces of $P$ and $Q$ and showing that the convergence rate of AP between $P$ and $Q$ is at worst $\max_{x,y} c_F(\affnot(P_x), \affnot(Q_y))^2$, where $\aff(C)$ is the affine hull of $C$ and $\affnot(C) = \aff(C)-c$ for some $c \in C$. The faces $\{P_x\}_{x \in \mathbb R^D}$ of $P$ are defined as the nonempty maximizers of linear functions over $P$, that is
\begin{equation} \label{eq:definition_of_face}
  P_x = \argmax_{p \in P} x^\top p .
\end{equation}
While we look at angles between pairs of faces, we remark that \citet{Deutsch2006Rate} consider a different generalization of the ``angle'' between arbitrary convex sets.

\textbf{Roadmap of the Analysis.} Our analysis has two main parts.
First, we relate the convergence rate of AP between polyhedra $P$ and $Q$ to the angles between the faces of $P$ and $Q$. 
To do so, we give a general condition under which AP converges linearly (Theorem~\ref{thm:convex_set_kappa_rate}), which we show depends on the angles between the faces of $P$ and $Q$ (Corollary~\ref{cor:max_kappa_and_min_angle}) in the polyhedral case. 
Second, we specialize to the polyhedra $\mathcal{A}$ and $\mathcal{B}$, 
and we equate the angles
 with eigenvalues of certain matrices and use tools from spectral graph theory to bound the relevant eigenvalues in terms of the conductance of a specific graph. 
This yields a worst-case bound of $1-\frac{1}{N^2R^2}$ on the rate, stated in Theorem~\ref{thm:upper_bound}.

In Theorem~\ref{thm:lower_bound}, we show a lower bound of $1 - \frac{2\pi^2}{N^2R}$ on the worst-case convergence rate.

\section{The Upper Bound}

We first derive an upper bound on the rate of convergence of AP between the polyhedra $\mathcal A$ and $\mathcal B$. 
The results in this section are proved in Appendix~\ref{app:upperbound}.

\subsection{A Condition for Linear Convergence} \label{sec:condition_linear_convergence}

We begin with a condition under which AP between two closed convex sets $P$ and $Q$ converges linearly.
This result is similar to that of \citet[Corollary~3.14]{Bauschke1993Convergence}, but the rate we achieve is twice as fast and relies on slightly weaker assumptions.

\begin{figure}
\centering
\definecolor{zzttqq}{rgb}{0.6,0.2,0.0}
\begin{subfigure}[t]{0.4\textwidth}
\centering

\vspace{0pt}

\definecolor{zzttqq}{rgb}{0.6,0.2,0.0}
\begin{tikzpicture}[line cap=round,line join=round,>=triangle 45,x=0.5172413793103449cm,y=0.5172413793103449cm]
\fill[color=zzttqq,fill=zzttqq,fill opacity=0.1] (3.42,2.8600000000000003) -- (-0.5000000000000009,2.86) -- (-2.56,4.9) -- (-2.7,5.54) -- (5.3999999999999995,5.54) -- cycle;
\fill[color=zzttqq,fill=zzttqq,fill opacity=0.1] (1.92,1.68) -- (4.26,1.68) -- (6.299999999999998,-2.1) -- (-3.8,-2.1) -- (-2.3,0.9) -- cycle;
\draw [dash pattern=on 3pt off 3pt,color=zzttqq] (3.42,2.8600000000000003)-- (-0.5000000000000009,2.86);
\draw [dash pattern=on 3pt off 3pt,color=zzttqq] (-0.5000000000000009,2.86)-- (-2.56,4.9);
\draw [dash pattern=on 3pt off 3pt,color=zzttqq] (-2.56,4.9)-- (-2.7,5.54);
\draw [dash pattern=on 3pt off 3pt,color=zzttqq] (5.3999999999999995,5.54)-- (3.42,2.8600000000000003);
\draw [dash pattern=on 3pt off 3pt,color=zzttqq] (1.92,1.68)-- (4.26,1.68);
\draw [dash pattern=on 3pt off 3pt,color=zzttqq] (4.26,1.68)-- (6.299999999999998,-2.1);
\draw [dash pattern=on 3pt off 3pt,color=zzttqq] (-3.8,-2.1)-- (-2.3,0.9);
\draw [dash pattern=on 3pt off 3pt,color=zzttqq] (-2.3,0.9)-- (1.92,1.68);
\draw [line width=2.8000000000000003pt] (1.9200000000000002,2.8600000000000003)-- (3.42,2.8600000000000003);
\draw [line width=2.8000000000000003pt] (1.92,1.68)-- (3.42,1.68);
\draw [->,line width=0.3pt,dash pattern=on 1pt off 1pt] (2.67,2.8600000000000003) -- (2.67,1.68);
\begin{scriptsize}
\draw[color=zzttqq] (1.4,4.48) node {$P$};
\draw[color=zzttqq] (1.4,0.18) node {$Q$};
\draw[color=black] (3.02,3.21) node {$E$};
\draw[color=black] (3.02,1.3) node {$H$};
\draw[color=black] (2.8800000000000003,2.4400000000000004) node {$v$};
\end{scriptsize}
\end{tikzpicture}

\end{subfigure}
\hspace{3em}
\begin{subfigure}[t]{0.4\textwidth}
\centering

\vspace{0pt}

\definecolor{zzttqq}{rgb}{0.6,0.2,0.0}
\begin{tikzpicture}[line cap=round,line join=round,>=triangle 45,x=0.5172413793103449cm,y=0.5172413793103449cm]
\fill[color=zzttqq,fill=zzttqq,fill opacity=0.1] (3.42,2.8600000000000003) -- (-0.5000000000000009,2.86) -- (-2.56,4.9) -- (-2.7,5.54) -- (5.3999999999999995,5.54) -- cycle;
\fill[color=zzttqq,fill=zzttqq,fill opacity=0.1] (1.92,2.8600000000000003) -- (4.26,2.8600000000000003) -- (6.299999999999998,-0.9199999999999997) -- (-3.8,-0.9199999999999997) -- (-2.3,2.0800000000000005) -- cycle;
\draw [dash pattern=on 3pt off 3pt,color=zzttqq] (3.42,2.8600000000000003)-- (-0.5000000000000009,2.86);
\draw [dash pattern=on 3pt off 3pt,color=zzttqq] (-0.5000000000000009,2.86)-- (-2.56,4.9);
\draw [dash pattern=on 3pt off 3pt,color=zzttqq] (-2.56,4.9)-- (-2.7,5.54);
\draw [dash pattern=on 3pt off 3pt,color=zzttqq] (5.3999999999999995,5.54)-- (3.42,2.8600000000000003);
\draw [line width=2.8000000000000003pt] (1.9200000000000002,2.8600000000000003)-- (3.42,2.8600000000000003);
\draw [dash pattern=on 3pt off 3pt,color=zzttqq] (1.92,2.8600000000000003)-- (4.26,2.8600000000000003);
\draw [dash pattern=on 3pt off 3pt,color=zzttqq] (4.26,2.8600000000000003)-- (6.299999999999998,-0.9199999999999997);
\draw [dash pattern=on 3pt off 3pt,color=zzttqq] (-3.8,-0.9199999999999997)-- (-2.3,2.0800000000000005);
\draw [dash pattern=on 3pt off 3pt,color=zzttqq] (-2.3,2.0800000000000005)-- (1.92,2.8600000000000003);
\begin{scriptsize}
\draw[color=zzttqq] (1.4,4.48) node {$P$};
\draw[color=black] (3.02,3.21) node {$E$};
\draw[color=zzttqq] (1.4,1.5) node {$Q'$};
\end{scriptsize}
\end{tikzpicture}

\end{subfigure}
\caption{The optimal sets $E$, $H$ in Equation~\eqref{eq:e_h}, the vector $v$, and the shifted polyhedron $Q'$.}
 \label{fig:p_and_q_and_p_and_qprime}
\end{figure}

We will need a few definitions from \citet{Bauschke1993Convergence}. 
Let
$d(K_1,K_2) = \inf \{\|k_1-k_2\| : k_1 \in K_1, k_2 \in K_2\}$ be the distance between sets $K_1$ and $K_2$. Define the sets of ``closest points'' as
\begin{equation}
  \label{eq:e_h}
  E  =  \{p \in P \,|\, d(p,Q) = d(P,Q)\} \quad\qquad
  H  =  \{q \in Q \,|\, d(q,P) = d(Q,P)\},
\end{equation}
and let $v = \Pi_{Q-P} 0$ (see Figure~\ref{fig:p_and_q_and_p_and_qprime}). Note that $H = E+v$, and when $P \cap Q \ne \emptyset$ we have $v = 0$ and $E = H = P \cap Q$. Therefore, we can think of the pair $(E,H)$ as a generalization of the intersection $P \cap Q$ to the setting where $P$ and $Q$ do not intersect. Pairs of points $(e, e+v) \in E \times H$ are solutions to the best-approximation problem between $P$ and $Q$. In our analysis, we will mostly study the translated version $Q' = Q-v$ of $Q$ that intersects $P$ at $E$. 

For $x \in \mathbb R^D \backslash E$, the
function $\kappa$ relates the distance to $E$ with the distances to $P$ and $Q'$, 
\begin{equation*} \label{eq:def_kappa}
  \kappa(x) = \frac{d(x, E)}{\max\{d(x,P), d(x,Q')\}} .
\end{equation*}
If $\kappa$ is bounded, then whenever $x$ is close to both $P$ and $Q'$, it must also be close to their intersection. 
If, for example, $D\ge 2$ and $P$ and $Q$ are balls of radius one whose centers are separated by distance exactly two, then $\kappa$ is unbounded. 
The maximum $\kappa_* = \sup_{x \in (P \cup Q') \backslash E} \kappa(x)$ is useful for bounding the convergence rate. 
\begin{theorem} \label{thm:convex_set_kappa_rate}
  Let $P$ and $Q$ be convex sets, and suppose that $\kappa_* < \infty$. Then AP between $P$ and $Q$ converges linearly with rate $1 - \frac{1}{\kappa_*^2}$. Specifically,
\begin{equation*}
  \label{eq:6}
    \|p_k-p_*\|  \le  2\|p_0-p_*\|( 1 - \tfrac{1}{\kappa_*^{2}} )^k  \quad\text{ and }\quad
  \|q_k-q_*\|  \le  2\|q_0-q_*\|( 1 - \tfrac{1}{\kappa_*^{2}} )^k.
\end{equation*}
\end{theorem}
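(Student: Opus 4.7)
The strategy is a Fej\'er monotonicity argument, with the constant $\kappa_*$ playing the role of a strong-convexity-type parameter. \emph{First}, I would translate to the intersecting case: set $Q' = Q - v$, so that $P \cap Q' = E$ and $\Pi_{Q'} x = \Pi_Q x - v$ for every $x$. Then $b_k - v = \Pi_{Q'} a_k$ and $\|a_k - b_k + v\| = d(a_k, Q')$. Any $e \in E$ paired with $e + v \in H$ is a fixed point of $\Pi_P \circ \Pi_Q$: the identities $\Pi_Q e = e + v$ and $\Pi_P(e + v) = e$ follow from $\|e - (e+v)\| = \|v\| = d(P, Q)$.

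\emph{Second}, I would obtain the per-step Fej\'er descent by applying firm nonexpansiveness of $\Pi_Q$ to $(a_k, e)$ and of $\Pi_P$ to $(b_k, e + v)$ and adding the two resulting inequalities, which yields
\[
\|a_{k+1} - e\|^2 + \|a_k - b_k + v\|^2 + \|b_k - a_{k+1} - v\|^2 \;\le\; \|a_k - e\|^2.
\]
Both ``progress'' terms are controlled by $\kappa_*$: since $a_k \in P$, one has $\|a_k - b_k + v\|^2 = d(a_k, Q')^2 \ge d(a_k, E)^2 / \kappa_*^2$; since $a_{k+1} \in P$ and $b_k - v \in Q'$, the second satisfies $\|b_k - a_{k+1} - v\|^2 \ge d(b_k - v, P)^2 \ge d(b_k - v, E)^2 / \kappa_*^2$.

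\emph{Third}, choosing the moving target $e = \Pi_E a_k$ (so $\|a_k - e\| = d(a_k, E)$) and using $d(a_{k+1}, E) \le \|a_{k+1} - e\|$ converts the descent into a contractive recursion on $d(a_k, E)^2$. Iterating gives geometric decay of $d(a_k, E)$, and then the telescoping estimate $\|a_{k+1} - a_k\| \le \|a_{k+1} - \Pi_E a_k\| + \|a_k - \Pi_E a_k\| \le 2\, d(a_k, E)$, summed as $\sum_{j \ge k}\|a_{j+1} - a_j\|$, transfers the decay to $\|a_k - p_*\|$ where $p_* = \lim_k a_k \in E$. The factor $2$ in the stated bound comes from this triangle inequality. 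The corresponding bound for $\|b_k - q_*\|$ follows by symmetry (or immediately from firm nonexpansiveness of $\Pi_Q$).

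The main technical obstacle is pinning down the sharp per-step contraction $(1 - 1/\kappa_*^2)$ for the norm, rather than the weaker $\sqrt{1-1/\kappa_*^2}$ that drops out of using only the first progress term. This requires exploiting both progress terms and relating $d(b_k - v, E)$ back to $d(a_k, E)$ via the projection identity $\|a_k - e\|^2 \ge d(a_k, Q')^2 + \|b_k - v - e\|^2$, valid because $b_k - v$ is the metric projection of $a_k$ onto the convex set $Q'$. The subspace case in which $P$ and $Q$ are two lines through the origin at angle $\theta$ --- giving $\kappa_* = 1/\sin\theta$ and per-step norm contraction exactly $\cos^2\theta = 1 - 1/\kappa_*^2$ --- serves as a tight benchmark: there both firm-nonexpansive inequalities hold with equality and both progress terms are nonzero, confirming that the claimed rate is achievable and that both terms are essential.
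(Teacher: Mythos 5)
Your plan follows the same route as the paper's proof: translate $Q$ to $Q'=Q-v$, use firm nonexpansiveness of the two projections against an anchor in $E$ (resp.\ $H=E+v$), and lower-bound the progress terms by $d(\cdot,E)^2/\kappa_*^2$. Two details as written, however, do not yet deliver the stated bound. First, adding the two firm-nonexpansiveness inequalities against the \emph{single} anchor $e=\Pi_E a_k$ and dropping the second progress term gives only $d(a_{k+1},E)^2\le(1-\kappa_*^{-2})\,d(a_k,E)^2$, i.e.\ a per-iteration factor of $\sqrt{1-\kappa_*^{-2}}$. You flag this yourself, but the repair is not to squeeze more out of the additive combination with one anchor; it is to chain the two half-steps sequentially, \emph{re-anchoring at the intermediate point}: the first inequality with $e=\Pi_E a_k$ gives $d(b_k-v,E)\le\|b_k-v-e\|\le(1-\kappa_*^{-2})^{1/2}\,d(a_k,E)$, and the second inequality applied with the new anchor $e'=\Pi_E(b_k-v)$ gives $d(a_{k+1},E)\le(1-\kappa_*^{-2})^{1/2}\,d(b_k-v,E)$; multiplying the two half-step contractions yields the claimed factor $1-\kappa_*^{-2}$ per full iteration. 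This is exactly the paper's argument, phrased there with $d(q_k,H)$ in place of $d(b_k-v,E)$.

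Second, your telescoping estimate $\|a_k-p_*\|\le\sum_{j\ge k}\|a_{j+1}-a_j\|\le 2\sum_{j\ge k}d(a_j,E)$ does prove linear convergence, but with prefactor $2\kappa_*^2\,d(a_0,E)$ rather than the stated $2\|p_0-p_*\|$; so the factor $2$ in the theorem does not come from that triangle inequality. It comes from Fej\'er monotonicity: each $e\in E$ is a fixed point of the nonexpansive map $\Pi_P\circ\Pi_Q$, so $\|a_j-c\|\le\|a_k-c\|$ for all $j\ge k$ and all $c\in E$; taking $c=\Pi_E a_k$ and letting $j\to\infty$ gives $\|a_k-p_*\|\le 2\,d(a_k,E)\le 2(1-\kappa_*^{-2})^k d(a_0,E)\le 2(1-\kappa_*^{-2})^k\|a_0-p_*\|$. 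This is the content of the Bauschke--Borwein result the paper invokes at the end of its proof. With these two repairs your argument coincides with the paper's.
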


\subsection{Relating $\kappa_*$ to the Angles Between Faces of the Polyhedra} \label{sec:relating_rate_to_angles}

In this section, we consider the case of polyhedra $P$ and $Q$, and we bound $\kappa_*$ in terms of the angles between pairs of their faces.
In Lemma~\ref{lem:kappa_increasing}, we show that $\kappa$ is nondecreasing along the sequence of points generated by AP between $P$ and $Q'$. We treat points $p$ for which $\kappa(p) = 1$ separately because those are the points from which AP between $P$ and $Q'$ converges in one step. This lemma enables us to bound $\kappa(p)$ by initializing AP at $p$ and bounding $\kappa$ at some later point in the resulting sequence.

\begin{lemma} \label{lem:kappa_increasing}
  For any $p \in P \backslash E$, either $\kappa(p) = 1$ or $1 < \kappa(p) \le \kappa(\Pi_{Q'}p)$. Similarly, for any $q \in Q' \backslash E$, either $\kappa(q) = 1$ or $1 < \kappa(q) \le \kappa(\Pi_P q)$.
\end{lemma}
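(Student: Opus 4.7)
The plan is to handle the trivial case $\kappa(p)=1$ separately and then focus on the case $\kappa(p) > 1$, showing that a single AP step can only increase $\kappa$. If $\kappa(p) = 1$ there is nothing to prove. Otherwise, write $q = \Pi_{Q'}p$, $p_1 = \Pi_P q$, $d_P = \|p-q\| = d(p, Q')$, and $d_Q = \|q-p_1\| = d(q, P)$. Since $d(p,Q') < d(p,E)$, the point $q$ cannot lie in $E$ (else $d(p,Q') = \|p - q\| \ge d(p,E)$), and because $E = P \cap Q'$ this forces $q \notin P$, so $d_Q > 0$ and $\kappa(q)$ is well-defined.

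The heart of the argument is three inequalities assembled around the intermediate point $p_1$. First, the variational inequality characterizing $p_1 = \Pi_P q$, applied to the test point $y = p \in P$, gives $(q-p_1)^\top(p-p_1) \le 0$ and therefore the Pythagorean bound $\|p - p_1\|^2 \le d_P^2 - d_Q^2$. Second, the same variational inequality applied to $y = \Pi_E q \in E \subseteq P$ yields $d(p_1, E)^2 \le d(q, E)^2 - d_Q^2$. Third, the triangle inequality gives $d(p,E) \le \|p - \Pi_E p_1\| \le \|p-p_1\| + d(p_1, E)$. Combining the three produces the single key estimate
\[
 d(p,E) \;\le\; \sqrt{d_P^2 - d_Q^2} \;+\; \sqrt{d(q,E)^2 - d_Q^2}.
\]

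To finish, I would rearrange this estimate (valid because $d(p,E) > d_P \ge \sqrt{d_P^2 - d_Q^2}$) and square to obtain $d(q,E)^2 \ge d(p,E)^2 - 2\,d(p,E)\sqrt{d_P^2 - d_Q^2} + d_P^2$. The desired inequality $\kappa(q) \ge \kappa(p)$ is equivalent to $d(q,E)^2\, d_P^2 \ge d(p,E)^2\, d_Q^2$, and multiplying through reduces the task to verifying
\[
\bigl(\,d(p,E)\sqrt{d_P^2 - d_Q^2}\; -\; d_P^2\,\bigr)^2 \;\ge\; 0,
\]
which is automatic. The statement for $q \in Q'\setminus E$ then follows by the completely symmetric argument with the roles of $P$ and $Q'$ swapped.

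I expect the main obstacle to be selecting the right three-inequality combination from the several available uses of the projection variational inequality. In particular, applying the variational inequality at $p_1$ with the two specific test points $p$ and $\Pi_E q$ (rather than, say, at $q$ with test points in $Q'$) is what makes $d_Q^2$ appear inside both square roots and produces the clean completion-of-squares identity needed at the end.
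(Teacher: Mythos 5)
Your proof is correct. It diverges from the paper's argument at the key step. The paper works with the triangle spanned by $p$, $q=\Pi_{Q'}p$, and $e=\Pi_E q$: setting $p''=\Pi_{[p,e]}q$, it chains $\kappa(p)\le \|p-e\|/\|p-q\| \le \|q-e\|/\|q-p''\| \le \kappa(q)$, where the middle inequality comes from writing the area of that triangle two ways, $\|p-e\|\,\|q-p''\| = \|p-q\|\,\|q-e\|\sin\theta \le \|p-q\|\,\|q-e\|$, and the last step uses $[p,e]\subseteq P$ so that $\|q-p''\|\ge d(q,P)$. You instead apply the variational inequality $(q-\Pi_P q)^\top(y-\Pi_P q)\le 0$ at the two test points $y=p$ and $y=\Pi_E q$, add a triangle inequality, and close with a completion of squares; I checked the algebra ($m^2b^2 - a^2c^2 \ge (a\sqrt{b^2-c^2}-b^2)^2 \ge 0$ in your notation) and the positivity needed before squaring ($d(p,E)>d_P$ since $\kappa(p)>1$), and everything goes through, including the preliminary facts $q\notin E$, $d_Q>0$, and $d(q,E)\ge d_Q$ needed for the square roots to be real. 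The paper's route is shorter and more visual but leans on a planar picture (projection onto a segment, an angle $\theta$); yours is longer but purely algebraic, using only the standard obtuse-angle characterization of projections onto closed convex sets, which makes every step mechanically checkable. Both establish exactly the claimed monotonicity, and the symmetric case $q\in Q'\backslash E$ follows in either treatment by exchanging the roles of $P$ and $Q'$.
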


We can now bound $\kappa$ by angles between faces of $P$ and $Q$.

\begin{proposition} \label{prop:bound_rate_by_angle}
  If $P$ and $Q$ are polyhedra and $p \in P \backslash E$, then there exist faces $P_x$ and $Q_y$ such that
\begin{equation*} \label{eq:bound_rate_by_angle}
  1- \frac{1}{\kappa(p)^2} \le c_F(\affnot(P_x), \affnot(Q_y))^2 .
\end{equation*}
The analogous statement holds when we replace $p \in P \backslash E$ with $q \in Q' \backslash E$.
\end{proposition}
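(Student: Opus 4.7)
The strategy is to reduce to the known affine-subspace case by pushing $p$ far along an AP sequence. I would set $p_0 = p$ and define the iterates $q_k = \Pi_{Q'}(p_k)$, $p_{k+1} = \Pi_P(q_k)$, which converge to a common limit $p_* \in E = P \cap Q'$. By iterating Lemma~\ref{lem:kappa_increasing}, $\kappa(p) \leq \kappa(p_k)$ for all $k$, so it suffices to bound $\kappa(p_k)$ for large $k$. Since polyhedra have finitely many faces and the AP map is piecewise affine, I would argue via pigeonhole/finiteness that along a subsequence $p_k$ lies in the relative interior of a fixed face $P_x$ of $P$ and $q_k$ lies in the relative interior of $Q_y - v$ for a fixed face $Q_y$ of $Q$; these will be the faces appearing in the bound.

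The key local step is that at such relative-interior points the polyhedral projections agree with projections onto the affine hulls $U = \aff(P_x)$ and $V = \aff(Q_y - v)$. Since the normal cone of a polyhedron at a relative-interior point of a face is contained in the orthogonal complement of that face's direction subspace, the optimality condition $p_k - q_k \in N_{Q'}(q_k)$ forces $p_k - q_k \in \affnot(Q_y)^\perp = V^\perp$ (using $\affnot(Q_y - v) = \affnot(Q_y)$), whence $q_k = \Pi_V(p_k)$ and $d(p_k, Q') = d(p_k, V)$. Because $p_k \in P_x \subseteq U$, the Friedrichs-angle inequality for affine subspaces applies: after translating by a point in $U \cap V$ (nonempty since $p_* \in U \cap V$), an orthogonal decomposition of $p_k - \Pi_{U \cap V}(p_k)$ combined with Equation~\eqref{eq:friedrichs_angle} yields $d(p_k, V)^2 \geq (1 - c_F^2)\, d(p_k, U \cap V)^2$ with $c_F = c_F(\affnot(P_x), \affnot(Q_y))$.

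To close the loop, I would show $d(p_k, E) \leq d(p_k, U \cap V)$ for $p_k$ sufficiently close to $p_*$: in the cleanest setup $p_*$ also lies in the relative interiors of $P_x$ and $Q_y - v$, so a small neighborhood of $p_*$ in $U \cap V$ sits inside $P_x \cap (Q_y - v) \subseteq P \cap Q' = E$, giving $\Pi_{U \cap V}(p_k) \in E$ for large $k$. Combining these steps yields $\kappa(p_k)^2 \leq 1/(1 - c_F^2)$, and Lemma~\ref{lem:kappa_increasing} then gives $1 - 1/\kappa(p)^2 \leq c_F^2 = c_F(\affnot(P_x), \affnot(Q_y))^2$. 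The case $q \in Q' \setminus E$ follows by the symmetric argument, swapping the roles of $P$ and $Q'$.

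The main obstacle I anticipate is the face-stabilization step: ensuring not only that iterates eventually lie in fixed face relative interiors, but that the chosen faces also contain the limit $p_*$ in their relative interiors, so the local inclusion $U \cap V \cap B \subseteq E$ is available. When $p_*$ is approached along the boundary of the stable face, one must choose different faces informed by the tangent direction of approach, or pass to smaller faces by a separate limiting argument. The remaining ingredients---normal cone characterizations at relative-interior points and the affine Friedrichs-angle inequality---are standard polyhedral and subspace geometry.
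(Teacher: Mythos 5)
Your overall strategy---push $p$ along the AP sequence using Lemma~\ref{lem:kappa_increasing}, localize to faces whose relative interiors contain the iterates, and then invoke the affine Friedrichs-angle inequality---is the same strategy the paper uses, and the local ingredients you cite (the normal-cone argument giving $d(p_k,Q')=d(p_k,\aff(Q_y-v))$, and the subspace inequality $d(u,V)\ge(1-c_F^2)^{1/2}d(u,U\cap V)$) are correct. But the step you yourself flag as the ``main obstacle'' is a genuine gap, not a technicality. To conclude $\kappa(p_k)\le(1-c_F^2)^{-1/2}$ you need $d(p_k,E)\le d(p_k,\aff(P_x)\cap\aff(Q_y-v))$, i.e.\ that $\Pi_{\aff(P_x)\cap\aff(Q_y-v)}(p_k)$ lies in $E$. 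Your justification assumes $p_*\in\relint(P_x)\cap\relint(Q_y-v)$, but this is exactly what fails in general: the limit $p_*$ of the AP iterates typically sits on the \emph{relative boundary} of the faces that the iterates traverse (e.g.\ when $E$ is a lower-dimensional face or an endpoint of a ray, the affine hulls $\aff(P_x)$ and $\aff(Q_y-v)$ can intersect in a set strictly larger than $E$ near $p_*$, and the affine projection can land outside $P\cap Q'$). The pigeonhole argument gives you a subsequence in fixed face relative interiors, but says nothing about where $p_*$ sits relative to those faces, so the inclusion ``a neighborhood of $p_*$ in $U\cap V$ sits inside $E$'' is unavailable, and your vague fallback (``choose different faces informed by the tangent direction'') is where all the remaining work lives.

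The paper closes this gap with two devices you would need. First, a preprocessing step: replace $p$ by $p^{t_0}=(1-t_0)p+t_0e$ with $e=\Pi_E p$, chosen close enough to $e$ that every face of $P$ or $Q'$ meeting $B_\epsilon(e)$ contains $e$; since $t\mapsto\kappa(p^t)$ is nondecreasing this costs nothing, and it anchors every face subsequently encountered to contain the common point $e$, so all the intersections $E_j=P_j\cap Q'_j$ are nonempty. Second, instead of a single pigeonhole, a finite induction over strictly nested faces $P\supseteq P_1\supseteq\cdots\supseteq P_J$ and $Q'\supseteq Q'_1\supseteq\cdots\supseteq Q'_J$ (with $\kappa(p)\le\kappa_1(p_1)\le\cdots\le\kappa_J(p_J)$) that terminates precisely when AP between the current faces stays in their relative interiors \emph{forever}. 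At that terminal stage the polyhedral AP sequence literally coincides with AP between the affine hulls, so by \citep[Theorem~9.3]{Deutsch1994Rate} its limit is $\Pi_{\aff(P_J)\cap\aff(Q'_J)}p_J$; since that limit is also the limit of a sequence in $P_J\cap Q'_J$, it lies in $E_J$. This is exactly the containment your argument is missing, and it is obtained without ever requiring $p_*$ to be in a relative interior.
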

Note that $\affnot(Q_y) = \affnot(Q'_y)$. Proposition~\ref{prop:bound_rate_by_angle} 
immediately gives us the following corollary.
\begin{corollary} \label{cor:max_kappa_and_min_angle}
  If $P$ and $Q$ are polyhedra, then 
\begin{equation*}
  1 - \frac{1}{\kappa_*^2} \le \max_{x,y \in \mathbb R^D} c_F(\affnot(P_x), \affnot(Q_y))^2 .
\end{equation*}
\end{corollary}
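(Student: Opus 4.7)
The plan is to apply Proposition~\ref{prop:bound_rate_by_angle} pointwise and then pass to a supremum, so the proof is essentially a short bookkeeping argument.

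First I would note that since $P$ and $Q$ are polyhedra, each has only finitely many distinct faces: every face $P_x$ is determined by the subset of defining halfspaces made tight by some $p \in P_x$, and there are only finitely many such subsets. The same is true of $Q$. Consequently the quantity
\[
  M \;=\; \max_{x,y \in \mathbb R^D} c_F(\affnot(P_x),\affnot(Q_y))^2
\]
is a maximum over finitely many values, hence attained and finite, with $M \le 1$.

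Next I would fix an arbitrary $p \in (P \cup Q') \setminus E$ and invoke Proposition~\ref{prop:bound_rate_by_angle}: depending on whether $p \in P \setminus E$ or $p \in Q' \setminus E$, the proposition produces a pair of faces $P_x, Q_y$ (depending on $p$) such that
\[
  1 - \tfrac{1}{\kappa(p)^2} \;\le\; c_F(\affnot(P_x),\affnot(Q_y))^2 \;\le\; M.
\]
Here I would also use the remark preceding the corollary that $\affnot(Q_y) = \affnot(Q'_y)$, so it does not matter that the proposition was stated in terms of $Q'$ while the bound involves faces of $Q$.

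Finally, since the bound $1 - 1/\kappa(p)^2 \le M$ holds for every $p \in (P \cup Q')\setminus E$ and the map $t \mapsto 1 - 1/t^2$ is monotone increasing, taking the supremum over such $p$ on the left-hand side yields $1 - 1/\kappa_*^2 \le M$, which is the statement of the corollary. The case $(P \cup Q')\setminus E = \emptyset$ is degenerate (then $P=E$, $Q=H$ and AP has already terminated), and the case $\kappa_* = \infty$ does not arise for polyhedra, so no real obstacle appears here — all the geometric work is carried out inside Proposition~\ref{prop:bound_rate_by_angle}, and this corollary merely records that the bound it provides is uniform over the finitely many face pairs.
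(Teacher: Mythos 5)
Your proposal is correct and is essentially the paper's own argument: the paper simply declares the corollary ``immediate'' from Proposition~\ref{prop:bound_rate_by_angle} together with the observation that $\affnot(Q_y)=\affnot(Q'_y)$, and your write-up just makes explicit the same pointwise-bound-then-supremum step (plus the finiteness of the set of face pairs, which justifies writing $\max$). No gaps.
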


\subsection{Angles Between Subspaces and Singular Values} \label{sec:connection_between_angles_and_matrices}
Corollary~\ref{cor:max_kappa_and_min_angle} leaves us with the task of bounding the Friedrichs angle.
To do so, we first relate the Friedrichs angle to the singular values of certain matrices in Lemma~\ref{lem:angles_and_singular_values}. We then specialize this to base polyhedra of submodular functions.
For convenience, we prove Lemma~\ref{lem:angles_and_singular_values} in Appendix~\ref{sec:proof_of_lem_angles_and_singular_values}, though this result is implicit in the characterization of principal angles between subspaces given in \citep[Section~1]{Knyazev2002Principal}.
Ideas connecting angles between subspaces and eigenvalues are also used by \citet{Diaconis2010Stochastic}. 

\begin{lemma} \label{lem:angles_and_singular_values}
  Let $S$ and $T$ be matrices with orthonormal rows and with equal numbers of columns. If all of the singular values of $S T^\top$ equal one, then $c_F(\nullspace(S),\nullspace(T)) = 0$. Otherwise, $c_F(\nullspace(S),\nullspace(T))$ is equal to the largest singular value of $S T^\top$ that is less than one.
\end{lemma}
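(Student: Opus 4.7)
Let $U = \nullspace(S)$ and $V = \nullspace(T)$, and let $m$ denote the number of rows of $S$. Because $S$ has orthonormal rows, the map $a \mapsto S^\top a$ is an isometry from $\mathbb R^m$ onto $U^\perp = \{S^\top a : a \in \mathbb R^m\}$; an analogous statement holds for $T$ and $V^\perp$. Substituting $u = S^\top a$ and $v = T^\top b$ yields
\begin{equation*}
\sigma_1(ST^\top) \;=\; \sup_{\|a\|,\,\|b\| \le 1} a^\top S T^\top b \;=\; \sup_{u \in U^\perp,\, v \in V^\perp,\, \|u\|,\|v\| \le 1} u^\top v,
\end{equation*}
which is the cosine of the smallest principal angle between $U^\perp$ and $V^\perp$. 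The usual recursive variational characterization of principal angles (maximize subject to orthogonality to previously selected pairs) then identifies the entire singular spectrum of $ST^\top$ with the cosines of the principal angles between $U^\perp$ and $V^\perp$. Consequently $c_F(U^\perp, V^\perp)$ equals the largest singular value of $ST^\top$ strictly less than $1$, or $0$ when no such singular value exists.

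Next I would appeal to the classical fact that the principal angles in $(0, \pi/2)$ between $(U, V)$ and between $(U^\perp, V^\perp)$ agree, counting multiplicity. This follows from the CS decomposition of the pair $(U, V)$, which provides a simultaneous orthonormal basis of $\mathbb R^D$ in which $U$ and $V$ split into orthogonal $1$- and $2$-dimensional blocks; in each such block the angle structure of the complementary pair is visibly the same, with the roles of ``angle'' and ``co-angle'' swapped. It follows that $c_F(U, V) = c_F(U^\perp, V^\perp)$, which combined with the previous paragraph handles the generic case of the lemma.

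Finally, for the degenerate case in which every singular value of $ST^\top$ equals $1$, every principal angle between $U^\perp$ and $V^\perp$ is zero, which forces $U^\perp \subseteq V^\perp$ or $V^\perp \subseteq U^\perp$ and hence $V \subseteq U$ or $U \subseteq V$. Then $U \cap V$ coincides with the smaller of the two subspaces, so one of $U \cap (U \cap V)^\perp$ or $V \cap (U \cap V)^\perp$ equals $\{0\}$, and the defining supremum collapses to $c_F(U, V) = 0$. The step I expect to require the most care is the reflection identity $c_F(U, V) = c_F(U^\perp, V^\perp)$; if invoking the CS decomposition is unwieldy, an alternative is to work directly with the projectors and match the spectra of $P_U P_V$ and $P_{U^\perp} P_{V^\perp} = I - P_U - P_V + P_U P_V$ outside $\{0, 1\}$.
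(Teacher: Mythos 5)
Your proposal is correct and follows essentially the same route as the paper: both identify the singular values of $ST^\top$ with the cosines of the principal angles between the row spaces $\range(S^\top)=\nullspace(S)^\perp$ and $\range(T^\top)=\nullspace(T)^\perp$ (including the observation that the multiplicity of the singular value $1$ accounts exactly for the intersection of those subspaces), and then transfer to the nullspaces via the identity $c_F(U,V)=c_F(U^\perp,V^\perp)$. The only difference is that the paper cites this complementation identity as a known fact rather than re-deriving it from the CS decomposition as you sketch.
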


\textbf{Faces of relevant polyhedra.} 
Let $\mathcal A_x$ and $\mathcal B_y$ be faces of the polyhedra $\mathcal A$ and $\mathcal B$ from Lemma~\ref{lem:duals}.
Since $\mathcal A$ is a vector space, its only nonempty face is $\mathcal A_x = \mathcal A$. Hence, $\mathcal A_x = \nullspace(S)$, where $S$ is an $N \times NR$ matrix of $N \times N$ identity matrices $I_N$:
\begin{equation} \label{eq:def_matrix_S}
  S = \frac{1}{\sqrt{R}}\bigg( \underbrace{\begin{array}{ccc} I_N & \cdots & I_N \end{array}}_{\text{repeated $R$ times}} \bigg) .
\end{equation}
The matrix for $\affnot(\mathcal B_y)$ requires a bit more elaboration. Since $\mathcal B$ is a Cartesian product, we have $\mathcal B_y = B(F_1)_{y_1} \times \cdots \times B(F_R)_{y_R}$, where $y=(y_1, \ldots, y_R)$ and $B(F_r)_{y_r}$ is a face of $B(F_r)$. To proceed, we use the following characterization of faces of base polytopes \cite[Proposition~4.7]{Bach2013Learning}.
\begin{proposition} \label{prop:base_polytope_faces}
  Let $F$ be a submodular function, and let $B(F)_x$ be a face of $B(F)$. Then there exists a partition of $V$ into disjoint sets $A_1, \ldots , A_M$ such that
\begin{equation*}
  \aff(B(F)_x) = \bigcap_{m=1}^M \{s \in \mathbb R^N \,|\, s(A_1 \cup \cdots \cup A_m) = F(A_1 \cup \cdots \cup A_m) \} .
\end{equation*}
\end{proposition}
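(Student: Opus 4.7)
The plan is to identify the partition from the lattice of \emph{tight constraints} on the face $B(F)_x$ and then conclude via a dimension count. Define $\mathcal{L} = \{S \subseteq V : s(S) = F(S) \text{ for all } s \in B(F)_x\}$. Clearly $\emptyset, V \in \mathcal L$. Using the identity $s(S_1) + s(S_2) = s(S_1 \cup S_2) + s(S_1 \cap S_2)$, the submodularity of $F$, and the defining inequalities of $B(F)$, one checks that $\mathcal L$ is closed under $\cup$ and $\cap$: for $S_1, S_2 \in \mathcal L$ and any $s \in B(F)_x$,
\[
  F(S_1) + F(S_2) = s(S_1) + s(S_2) = s(S_1 \cup S_2) + s(S_1 \cap S_2) \leq F(S_1 \cup S_2) + F(S_1 \cap S_2) \leq F(S_1) + F(S_2),
\]
forcing equality throughout. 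I then pick a \emph{maximal} chain $\emptyset = B_0 \subsetneq B_1 \subsetneq \cdots \subsetneq B_M = V$ in $\mathcal L$ and set $A_m = B_m \setminus B_{m-1}$; this yields a partition $A_1, \ldots, A_M$ of $V$, and because every $s \in B(F)_x$ satisfies $s(B_m) = F(B_m)$, we immediately obtain
\[
  \aff(B(F)_x) \subseteq \bigcap_{m=1}^M \{s \in \mathbb R^N : s(B_m) = F(B_m)\}.
\]

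For the reverse inclusion I match dimensions. The right-hand side has dimension $N - M$ since the normals $\mathbf 1_{B_m}$ are linearly independent. For the left-hand side, define contracted submodular functions $F_m : 2^{A_m} \to \mathbb R$ by $F_m(C) = F(B_{m-1} \cup C) - F(B_{m-1})$, each with $F_m(\emptyset) = 0$. Applying the greedy algorithm of Edmonds to orderings of $V$ that exhaust $A_1$ first, then $A_2$, and so on, and invoking the standard product decomposition that this induces, one establishes $B(F)_x = B(F_1) \times \cdots \times B(F_M)$ under the natural identification $\mathbb R^V \cong \mathbb R^{A_1} \times \cdots \times \mathbb R^{A_M}$, whence $\dim B(F)_x = \sum_{m=1}^M \dim B(F_m)$. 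Combined with the claim $\dim B(F_m) = |A_m| - 1$ below, this gives $\dim B(F)_x = N - M$, which together with the inclusion above forces equality of the affine hulls.

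The main obstacle is establishing that each $B(F_m)$ is full-dimensional in its defining hyperplane $\{s' \in \mathbb R^{A_m} : s'(A_m) = F_m(A_m)\}$; this is where \emph{maximality} of the chain is essential. Suppose to the contrary that some proper nonempty subset $C \subsetneq A_m$ satisfies $s'(C) = F_m(C)$ for every $s' \in B(F_m)$. Translating back through the product identification, this gives $s(B_{m-1} \cup C) = F(B_{m-1} \cup C)$ for every $s \in B(F)_x$, which places $B_{m-1} \cup C$ in $\mathcal L$ strictly between $B_{m-1}$ and $B_m$ and contradicts maximality. Hence $\dim B(F_m) = |A_m| - 1$ for every $m$, closing the dimension argument.
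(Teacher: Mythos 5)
The paper itself does not prove this proposition; it imports it verbatim from Bach's monograph (Proposition~4.7 there). Your self-contained argument via the lattice of tight sets plus a dimension count is therefore a genuinely independent route, and most of it is sound: the closure of $\mathcal L$ under $\cup$ and $\cap$ (note your chain of equalities also yields $F(S_1)+F(S_2)=F(S_1\cup S_2)+F(S_1\cap S_2)$ for $S_1,S_2\in\mathcal L$, i.e.\ $F$ is modular on $\mathcal L$ -- you will want this), the easy inclusion, the computation $\dim(\mathrm{RHS})=N-M$ from the linear independence of the nested indicators, and the maximality-based contradiction showing that no proper nonempty $C\subsetneq A_m$ is tight for all of $B(F_m)$ are all correct.

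The one step that does not go through as written is the identity $B(F)_x=B(F_1)\times\cdots\times B(F_M)$. Greedy orderings that exhaust $A_1$ first, then $A_2$, and so on, are consistent with the chain $B_1\subsetneq\cdots\subsetneq B_M$, and the standard argument identifies their convex hull with the face $\{s\in B(F)\,:\,s(B_m)=F(B_m)\ \forall m\}=\prod_m B(F_m)$. But these orderings need not be consistent with $x$ itself, because a maximal chain of $\mathcal L$ need not contain the level-set unions $D_j$ of $x$ that define $B(F)_x$ via Edmonds' characterization (if $B(F)_x$ is a vertex, $\mathcal L=2^V$ and most maximal chains miss $D_1$). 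So the greedy argument directly gives only $B(F)_x\subseteq\prod_m B(F_m)$, i.e.\ the upper bound $\dim B(F)_x\le N-M$, whereas your dimension count needs the lower bound, which is exactly the reverse inclusion. The repair is short: choose the maximal chain of $\mathcal L$ to \emph{extend} the chain $\emptyset\subseteq D_1\subseteq\cdots\subseteq D_K=V$ (possible since each $D_j\in\mathcal L$); then every $s\in B(F)$ tight on all $B_m$ is in particular tight on all $D_j$ and hence lies in $B(F)_x$, and the rest of your argument, including the maximality contradiction, is unchanged. (Alternatively one can keep an arbitrary maximal chain and use the modularity of $F$ on $\mathcal L$ to check that any concatenation of points $s_m\in B(F_m)$ is tight on every set of $\mathcal L$, but the first fix is cleaner.) With that amendment the proof is complete.
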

The following corollary is immediate.
\begin{corollary} \label{cor:base_polytope_faces_translated}
  Define $F$, $B(F)_x$, and $A_1, \ldots, A_M$ as in Proposition~\ref{prop:base_polytope_faces}. Then
\begin{equation*}
  \affnot(B(F)_x) = \bigcap_{m=1}^M \{s \in \mathbb R^N \,|\, s(A_1 \cup \cdots \cup A_m) = 0\} .
\end{equation*}
\end{corollary}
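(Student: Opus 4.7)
The plan is to derive this corollary as a direct consequence of Proposition~\ref{prop:base_polytope_faces} by translating the affine hull by a point in the face. Recall that by definition $\affnot(B(F)_x) = \aff(B(F)_x) - c$ for any $c \in B(F)_x$, so the entire task is to show that subtracting such a $c$ from each defining hyperplane turns the right-hand side of the $F$-equality into $0$.

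First, I would pick any $c \in B(F)_x$. Since $B(F)_x \subseteq \aff(B(F)_x)$, Proposition~\ref{prop:base_polytope_faces} forces $c(A_1 \cup \cdots \cup A_m) = F(A_1 \cup \cdots \cup A_m)$ for every $m = 1, \ldots, M$. Next, I would write $s \in \affnot(B(F)_x)$ iff $s + c \in \aff(B(F)_x)$, apply Proposition~\ref{prop:base_polytope_faces} to $s+c$, and use linearity of $t \mapsto t(A_1 \cup \cdots \cup A_m)$ together with the equality satisfied by $c$ to convert each constraint $(s+c)(A_1 \cup \cdots \cup A_m) = F(A_1 \cup \cdots \cup A_m)$ into $s(A_1 \cup \cdots \cup A_m) = 0$. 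Intersecting these hyperplanes over $m$ yields precisely the claimed set.

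There is no real obstacle here; the only things to keep track of are that $t \mapsto t(A)$ is linear in $t$ (so that $(s+c)(A) = s(A) + c(A)$) and that $\affnot$ is independent of the representative $c$ chosen, which is automatic because $\aff(B(F)_x)$ is an affine subspace. The corollary is genuinely a one-line translation of Proposition~\ref{prop:base_polytope_faces}.
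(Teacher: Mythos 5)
Your proof is correct and matches the paper's intent exactly: the paper simply declares the corollary "immediate" from Proposition~\ref{prop:base_polytope_faces}, and the translation argument you spell out (any $c \in B(F)_x$ satisfies $c(A_1 \cup \cdots \cup A_m) = F(A_1 \cup \cdots \cup A_m)$, so subtracting $c$ and using linearity of $t \mapsto t(A)$ zeroes out the right-hand sides) is the one-line justification being left implicit.
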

By Corollary~\ref{cor:base_polytope_faces_translated}, for each $F_r$, there exists a partition of $V$ into disjoint sets $A_{r1}, \ldots, A_{rM_r}$ such that
\begin{equation} \label{eq:B_as_intersection_of_vspaces}
  \affnot(\mathcal B_y) = \bigcap_{r=1}^R \bigcap_{m=1}^{M_r} \{(s_1, \ldots, s_R) \in \mathbb R^{NR} \,|\, s_r(A_{r1} \cup \cdots \cup A_{rm}) = 0\} .
\end{equation}
In other words, we can write $\affnot(\mathcal B_y)$ as the nullspace of either of the matrices
\begin{equation*} \label{eq:B_as_null_space_of_matrix}
T' = \left(
  \begin{array}{ccc}
    1_{A_{11}}^\top  & & \\
    \vdots & & \\
    1_{A_{11} \cup \cdots \cup A_{1M_1}}^\top  & & \\
    & \ddots & \\
    & & 1_{A_{R1}}^\top  \\
    & & \vdots \\
    & & 1_{A_{R1} \cup \cdots \cup A_{RM_R}}^\top 
  \end{array}
  \right)
\text{ or }
T = \left(
  \begin{array}{ccc}
    \frac{1_{A_{11}}^\top }{\sqrt{|A_{11}|}}  & & \\
    \vdots & & \\
    \frac{1_{A_{1M_1}}^\top }{\sqrt{|A_{1M_1}|}} & & \\[-5pt]
    & \ddots & \\[-5pt]
    & & \frac{1_{A_{R1}}^\top }{\sqrt{|A_{R1}|}} \\
    & & \vdots \\
    & & \frac{1_{A_{RM_R}}^\top }{\sqrt{|A_{RM_R}|}}
  \end{array}
  \right) ,
\end{equation*}
where $1_A$ is the indicator vector of $A \subseteq V$. For $T'$, this follows directly from Equation~\eqref{eq:B_as_intersection_of_vspaces}. $T$ can be obtained from $T'$ via left multiplication by an invertible matrix,  so $T$ and $T'$ have the same nullspace.
Lemma~\ref{lem:angles_and_singular_values} then implies that $c_F(\affnot(\mathcal A_x),\affnot(\mathcal B_y))$ equals the largest singular value of
\begin{equation*}
  ST^\top  = \frac{1}{\sqrt{R}} \left( \begin{array}{ccccccccc}
    \frac{1_{A_{11}}}{\sqrt{|A_{11}|}}  & \cdots & \frac{1_{A_{1M_1}}}{\sqrt{|A_{1M_1}|}}  & & \cdots & & \frac{1_{A_{R1}}}{\sqrt{|A_{R1}|}}  & \cdots & \frac{1_{A_{RM_R}}}{\sqrt{|A_{RM_R}|}} 
  \end{array}\right) 
\end{equation*}
that is less than one. 
We rephrase this conclusion in the following remark.
\begin{remark} \label{rmk:eigenvalues_and_angles}
  The largest eigenvalue of $(ST^\top )^\top (ST^\top )$ less than one equals $c_F(\affnot(\mathcal A_x),\affnot(\mathcal B_y))^2$.
\end{remark}
Let $M_{\text{all}} = M_1 + \cdots + M_R$. Then $(ST^\top )^\top (ST^\top )$ is the $M_{\text{all}} \times M_{\text{all}}$ square matrix whose rows and columns are indexed by $(r,m)$ with $1 \le r \le R$ and $1 \le m \le M_r$ and whose entry corresponding to row $(r_1,m_1)$ and column $(r_2,m_2)$ equals
\begin{equation*}
  \frac{1}{R}\frac{1_{A_{r_1m_1}}^\top 1_{A_{r_2m_2}}}{\sqrt{|A_{r_1m_1}| |A_{r_2m_2}|}} = \frac{1}{R}\frac{|A_{r_1m_1} \cap A_{r_2m_2}|}{\sqrt{|A_{r_1m_1}| |A_{r_2m_2}|}} .
\end{equation*}

\subsection{Bounding the Relevant Eigenvalues} \label{sec:bounding_eigenvalues}
It remains to bound the largest eigenvalue of $(ST^\top )^\top (ST^\top )$ that is less than one. 
To do so, we view the matrix in terms of the symmetric normalized Laplacian of a weighted graph. 
Let $G$ be the graph whose vertices are indexed by $(r,m)$ with $1 \le r \le R$ and $1 \le m \le M_r$. Let the edge between vertices $(r_1,m_1)$ and $(r_2,m_2)$ have weight $|A_{r_1m_1} \cap A_{r_2m_2}|$. We may assume that $G$ is connected (the analysis in this case subsumes the analysis in the general case). The symmetric normalized Laplacian $\mathcal L$ of this graph is closely related to our matrix of interest,
\begin{equation} \label{eq:outerproduct_as_laplacian}
  (ST^\top )^\top (ST^\top ) = I - \tfrac{R-1}{R} \mathcal L.
\end{equation}
Hence, the largest eigenvalue of $(ST^\top )^\top (ST^\top )$ that is less than one can be determined from the smallest nonzero eigenvalue $\lambda_2(\mathcal{L})$ of $\mathcal{L}$. We bound $\lambda_2(\mathcal{L})$ via Cheeger's inequality (stated in Appendix~\ref{sec:cheeger_inequality_appendix}) by bounding the Cheeger constant $h_G$ of $G$.
\begin{lemma} \label{lem:bound_Cheeger}
  For $R \ge 2$, we have $h_G \ge \frac{2}{NR}$ and hence $\lambda_2(\mathcal{L}) \geq \frac{2}{N^2R^2}$.
\end{lemma}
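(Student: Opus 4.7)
The plan is to prove $h_G \ge 2/(NR)$ directly and then invoke the Cheeger inequality ($\lambda_2(\mathcal{L}) \ge h_G^2/2$, stated in Appendix~\ref{sec:cheeger_inequality_appendix}) to conclude. Fix a non-trivial $S \subsetneq V(G)$ with, by symmetry, $\mathrm{vol}(S) \le \mathrm{vol}(\bar S)$; in particular $\mathrm{vol}(S) \le \mathrm{vol}(G)/2 = R(R-1)N/2$. For each $v \in V$ and $r \in \{1,\ldots,R\}$, let $f_r(v)$ denote the unique index with $v \in A_{r f_r(v)}$, and set $a_v = |\{r : (r, f_r(v)) \in S\}| \in \{0,1,\ldots,R\}$. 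A direct count gives $\mathrm{vol}(S) = (R-1)\sum_v a_v$. Moreover, each $v$ contributes weight $1$ to each edge between vertices of the form $(r_1, f_{r_1}(v))$ and $(r_2, f_{r_2}(v))$ with $r_1 \neq r_2$ (there are $\binom{R}{2}$ such edges), and exactly $a_v(R - a_v)$ of them cross the cut, yielding $w(S,\bar S) = \sum_v a_v(R-a_v)$.

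The crux is to show that $T := \{v : 1 \le a_v \le R-1\}$ is non-empty. Suppose for contradiction $T = \emptyset$, so every $v$ lies in $V_S := \{v : a_v = R\}$ or in $V_{\bar S} := \{v : a_v = 0\}$. If some $A_{rm}$ contained both a $v \in V_S$ and a $v' \in V_{\bar S}$, then the vertex $(r,m) = (r, f_r(v)) = (r, f_r(v'))$ would have to lie simultaneously in $S$ and in $\bar S$, which is impossible. Hence every $A_{rm}$ is entirely contained in $V_S$ or in $V_{\bar S}$, and any edge between some $(r_1,m_1) \in S$ and $(r_2,m_2) \in \bar S$ has weight $|A_{r_1m_1}\cap A_{r_2m_2}| = 0$. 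Thus $S$ would be a union of connected components of $G$, contradicting connectedness of $G$ together with the non-triviality of $S$.

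Given $|T| \ge 1$, the bound follows immediately: for any $v \in T$, $a_v(R-a_v) \ge \min_{1 \le k \le R-1} k(R-k) = R-1$, so $w(S, \bar S) \ge (R-1)|T| \ge R-1$. Combined with $\mathrm{vol}(S) \le R(R-1)N/2$, this gives $w(S,\bar S)/\mathrm{vol}(S) \ge 2/(NR)$, proving $h_G \ge 2/(NR)$; Cheeger's inequality then yields $\lambda_2(\mathcal{L}) \ge h_G^2/2 \ge 2/(N^2R^2)$. The main subtlety is the connectedness argument: since the volume bound $R(R-1)N/2$ is exactly absorbed by the single unit of cut weight $R-1$ in the worst case (as seen in the tight $R=2, N=3$ instance), no estimate weaker than $|T| \ge 1$ would suffice, and the connectedness hypothesis is precisely what guarantees this.
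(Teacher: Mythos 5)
Your proof is correct and follows essentially the same route as the paper's: bound $\min(\vol(S),\vol(\bar S))$ by half the total volume $\tfrac12 NR(R-1)$, use connectedness to produce an element $v\in V$ whose partition blocks straddle the cut, and lower-bound the cut weight by $a_v(R-a_v)\ge R-1$ before invoking Cheeger's inequality. The only difference is presentational — you derive exact identities $\vol(S)=(R-1)\sum_v a_v$ and $w(S,\bar S)=\sum_v a_v(R-a_v)$ and spell out the connectedness step by contradiction, where the paper states the corresponding bounds directly.
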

  We prove Lemma~\ref{lem:bound_Cheeger} in Appendix~\ref{sec:proof_of_lem_bound_cheeger}. 
Combining  Remark~\ref{rmk:eigenvalues_and_angles},
Equation~\eqref{eq:outerproduct_as_laplacian},
and Lemma~\ref{lem:bound_Cheeger}, we obtain the following bound on the Friedrichs angle.
\begin{proposition} \label{prop:bound_on_angle_synthesizing_stuff}
  Assuming that $R \ge 2$, we have
\begin{equation*}
  c_F(\affnot(\mathcal A_x),\affnot(\mathcal B_y))^2 \le 1 - \tfrac{R-1}{R} \tfrac{2}{N^2R^2} \le 1 - \tfrac{1}{N^2R^2} .
\end{equation*}
\end{proposition}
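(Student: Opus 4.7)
The statement synthesizes Remark~\ref{rmk:eigenvalues_and_angles}, Equation~\eqref{eq:outerproduct_as_laplacian}, and Lemma~\ref{lem:bound_Cheeger}, so the plan is primarily a bookkeeping exercise that tracks how eigenvalues of the symmetric normalized Laplacian $\mathcal L$ map to eigenvalues of $(ST^\top)^\top(ST^\top)$.

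First I would diagonalize. Equation~\eqref{eq:outerproduct_as_laplacian} gives $(ST^\top)^\top(ST^\top) = I - \tfrac{R-1}{R}\mathcal L$, so the two matrices share an eigenbasis, and each eigenvalue $\lambda$ of $\mathcal L$ corresponds to the eigenvalue $1 - \tfrac{R-1}{R}\lambda$ of $(ST^\top)^\top(ST^\top)$. Since $G$ is connected (the case we have reduced to), the Laplacian $\mathcal L$ has $\lambda_1(\mathcal L) = 0$ with multiplicity one, and the next-smallest eigenvalue $\lambda_2(\mathcal L)$ is strictly positive. Under the correspondence above, the zero eigenvalue of $\mathcal L$ is exactly the eigenvalue $1$ of $(ST^\top)^\top(ST^\top)$, and the \emph{largest} eigenvalue of $(ST^\top)^\top(ST^\top)$ strictly less than one is $1 - \tfrac{R-1}{R}\lambda_2(\mathcal L)$.

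Next I would invoke Remark~\ref{rmk:eigenvalues_and_angles}, which identifies this largest sub-unit eigenvalue with $c_F(\affnot(\mathcal A_x),\affnot(\mathcal B_y))^2$. Plugging in the bound $\lambda_2(\mathcal L) \ge \tfrac{2}{N^2R^2}$ from Lemma~\ref{lem:bound_Cheeger} gives
\begin{equation*}
  c_F(\affnot(\mathcal A_x),\affnot(\mathcal B_y))^2 \;=\; 1 - \tfrac{R-1}{R}\lambda_2(\mathcal L) \;\le\; 1 - \tfrac{R-1}{R}\tfrac{2}{N^2R^2},
\end{equation*}
which is the first claimed inequality. The second inequality then follows from the elementary fact that $R \ge 2$ implies $\tfrac{2(R-1)}{R} \ge 1$, so $\tfrac{R-1}{R}\cdot\tfrac{2}{N^2R^2} \ge \tfrac{1}{N^2R^2}$.

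There is no serious obstacle here, since all the analytic work has been done in Lemma~\ref{lem:bound_Cheeger} (Cheeger bound on $G$) and in the identification of Equation~\eqref{eq:outerproduct_as_laplacian}. The only point to be careful about is the connectedness assumption on $G$: if $G$ were disconnected, $\mathcal L$ would have a repeated zero eigenvalue, but in that case $(ST^\top)^\top(ST^\top)$ still has a well-defined largest eigenvalue strictly below one corresponding to the smallest nonzero $\lambda$ of $\mathcal L$, and the component-wise analysis reduces to the connected case. I would mention this briefly and then move on.
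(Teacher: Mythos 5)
Your proposal is correct and follows exactly the synthesis the paper intends: it combines Remark~\ref{rmk:eigenvalues_and_angles}, the identity $(ST^\top)^\top(ST^\top) = I - \tfrac{R-1}{R}\mathcal L$, and the Cheeger-based bound $\lambda_2(\mathcal L) \ge \tfrac{2}{N^2R^2}$ in the same way, with the final inequality from $\tfrac{R-1}{R} \ge \tfrac12$ for $R \ge 2$. Your explicit handling of the eigenvalue correspondence and the connectedness caveat merely fills in details the paper leaves implicit.
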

Together with Theorem~\ref{thm:convex_set_kappa_rate} and Corollary~\ref{cor:max_kappa_and_min_angle}, Proposition~\ref{prop:bound_on_angle_synthesizing_stuff} implies the final bound on the rate.
\begin{theorem} \label{thm:upper_bound}
The AP algorithm for Problem~\eqref{eq:dual_problem}
converges linearly with rate $1 - \frac{1}{N^2R^2}$, i.e.,
\begin{equation*}
  \|a_k - a_*\| \leq 2\|a_0 - a_*\| (1 - \tfrac{1}{N^2R^2})^k \;\;\; \text{ and } \;\;\;
  \|b_k - b_*\| \leq 2\|b_0 - b_*\| (1 - \tfrac{1}{N^2R^2})^k.
\end{equation*}
\end{theorem}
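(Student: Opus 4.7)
The plan is to assemble the pipeline established in Sections~3.1--3.4 into a single chain of inequalities. The key observation is that Proposition~\ref{prop:bound_on_angle_synthesizing_stuff} already furnishes a bound on the squared Friedrichs angle that is \emph{uniform} in the pair of faces $(\mathcal A_x, \mathcal B_y)$, since its right-hand side $1 - \frac{1}{N^2R^2}$ depends neither on $x$ nor on $y$. Taking the maximum over face pairs and applying Corollary~\ref{cor:max_kappa_and_min_angle} to the polyhedra $\mathcal A$ and $\mathcal B$ therefore yields
\begin{equation*}
  1 - \frac{1}{\kappa_*^2} \;\le\; \max_{x,y} c_F(\affnot(\mathcal A_x),\affnot(\mathcal B_y))^2 \;\le\; 1 - \frac{1}{N^2R^2},
\end{equation*}
which rearranges to $\kappa_* \le NR < \infty$ provided $R \ge 2$.

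Next, I would invoke Theorem~\ref{thm:convex_set_kappa_rate} with $P = \mathcal A$ and $Q = \mathcal B$. Its hypothesis $\kappa_* < \infty$ is discharged by the previous step, so the AP iterates obey $\|a_k - a_*\| \le 2\|a_0 - a_*\|(1 - 1/\kappa_*^2)^k$ and the analogous inequality for $b_k$. Since $1 - 1/\kappa_*^2 \le 1 - 1/(N^2R^2)$, replacing $\kappa_*^2$ by $N^2R^2$ in the base of the geometric factor can only weaken the bound, and we recover exactly the two inequalities stated in the theorem.

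The degenerate case $R = 1$ lies outside the hypothesis of Proposition~\ref{prop:bound_on_angle_synthesizing_stuff}, but is trivial: a single simple function is minimized by one projection onto $B(F_1)$, so AP terminates after one step and any rate strictly below one is vacuously valid. I do not anticipate a genuine obstacle in this proof, since all of the intellectual content has already been absorbed into Theorem~\ref{thm:convex_set_kappa_rate}, Corollary~\ref{cor:max_kappa_and_min_angle}, and Lemma~\ref{lem:bound_Cheeger}. The only spot demanding care is to record the chain of inequalities in the correct direction — in particular, to note that $t \mapsto 1 - 1/t^2$ is monotone increasing in $t > 0$ so that an upper bound on the Friedrichs angle translates directly into an upper bound on the rate $1 - 1/\kappa_*^2$, rather than the reverse.
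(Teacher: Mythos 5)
Your proposal is correct and follows essentially the same route as the paper, which proves the theorem by exactly this synthesis of Theorem~\ref{thm:convex_set_kappa_rate}, Corollary~\ref{cor:max_kappa_and_min_angle}, and Proposition~\ref{prop:bound_on_angle_synthesizing_stuff} (with the finiteness of $\kappa_*$ discharged just as you describe). Your explicit treatment of the degenerate case $R=1$ and the remark on the direction of the inequalities are fine, if slightly more than the paper bothers to write down.
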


\section{A Lower Bound} \label{sec:lower_bound}
To probe the tightness of Theorem~\ref{thm:upper_bound}, we construct a ``bad'' submodular function and decomposition that lead to a slow rate. Appendix~\ref{app:lower_bound} gives the formal details. Our example is an augmented cut function on a cycle:
for each $x,y \in V$, define 
$G_{xy}$ 
to be the cut function of a single edge $(x,y)$,
\begin{equation*}
  \label{eq:7}
  G_{xy} =
  \begin{cases}
    1 & \text{ if } |A \inter \{x,y\}| = 1\\
    0 & \text{ otherwise }.
  \end{cases}
\end{equation*}
Take $N$ to be even and $R \ge 2$ and define the submodular function $F^{\text{lb}} = F_1^{\text{lb}} + \cdots + F_R^{\text{lb}}$, where
\begin{align*}
  \label{eq:worstdecomp}
  F_1^{\text{lb}}  =  G_{12} + G_{34} + \cdots + G_{(N-1)N} \qquad\quad
  F_2^{\text{lb}}  =  G_{23} + G_{45} + \cdots + G_{N1} 
\end{align*}
and $F_r^{\text{lb}} = 0$ for all $r \geq 3$.
The optimal solution to the best-approximation problem is the all zeros vector.

\begin{lemma} \label{lem:lower_bound_example}
The cosine of the Friedrichs angle between $\mathcal A$ and $\aff(\mathcal B^{\text{lb}})$ is
\begin{equation*}
  c_F(\mathcal A, \aff(\mathcal B^{\text{lb}}))^2 = 1 - \tfrac{1}{R}\left( 1 - \cos\left(\tfrac{2\pi}{N}\right)\right).
\end{equation*}
\end{lemma}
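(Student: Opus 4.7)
The plan is to apply the machinery of Section~\ref{sec:connection_between_angles_and_matrices} directly. Specifically, I would (i) identify, via Proposition~\ref{prop:base_polytope_faces}, the partition of $V$ associated to $\aff(B(F_r^{\text{lb}}))$ for each $r$, (ii) assemble the matrix $ST^\top$ from those partitions, (iii) write the resulting Gram matrix as a circulant perturbation of the identity involving the adjacency matrix of the $N$-cycle, and (iv) read off the relevant eigenvalue from the known spectrum of that adjacency matrix. The answer then follows from Remark~\ref{rmk:eigenvalues_and_angles}.

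For the partitions: since $F_1^{\text{lb}}(\{2i-1,2i\}) = 0$ and $F_1^{\text{lb}}(V) = 0$, every $s \in B(F_1^{\text{lb}})$ must satisfy $s_{2i-1} + s_{2i} = 0$ for each $i$, and the vectors $\pm(e_{2i-1}-e_{2i})$ all lie in the base polytope, so $B(F_1^{\text{lb}})$ has dimension exactly $N/2$ and the associated partition is the matching $\{1,2\},\{3,4\},\ldots,\{N-1,N\}$. The analogous computation gives the shifted matching $\{2,3\},\{4,5\},\ldots,\{N,1\}$ for $F_2^{\text{lb}}$. For $r \ge 3$ we have $B(F_r^{\text{lb}}) = \{0\}$, so the partition is into singletons. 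Since $0 \in \mathcal B^{\text{lb}}$, we have $\affnot(\mathcal B^{\text{lb}}) = \aff(\mathcal B^{\text{lb}})$, so it suffices to compute the Friedrichs angle between the vector subspaces $\mathcal A$ and $\aff(\mathcal B^{\text{lb}})$.

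Next, I would work with the $N \times N$ Gram matrix $G := (ST^\top)(ST^\top)^\top$, whose nonzero eigenvalues coincide with those of $(ST^\top)^\top(ST^\top)$. A direct expansion using the formula for $S$ from Equation~\eqref{eq:def_matrix_S} gives
\begin{equation*}
G_{ij} = \frac{1}{R}\sum_{r,m}\frac{\mathbb{1}[i,j \in A_{rm}]}{|A_{rm}|}.
\end{equation*}
Summing the contributions (the size-$2$ blocks from $r=1,2$ each give $\tfrac12$ on the diagonal and $\tfrac12$ at the matched pair; the singletons from $r\ge 3$ give $\delta_{ij}$), one obtains
\begin{equation*}
G = \frac{1}{R}\Bigl[(R-1)I + \tfrac{1}{2} A_{\text{cyc}}\Bigr],
\end{equation*}
where $A_{\text{cyc}}$ is the adjacency matrix of the $N$-cycle: the off-diagonal entry $G_{ij}$ equals $\tfrac{1}{2R}$ exactly when $\{i,j\}$ is an edge of the cycle (which lies in exactly one of the two matchings) and is zero otherwise.

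Finally, the eigenvalues of $A_{\text{cyc}}$ are $2\cos(2\pi k/N)$ for $k = 0,\ldots,N-1$, so the eigenvalues of $G$ are $\frac{(R-1)+\cos(2\pi k/N)}{R}$. The Perron eigenvalue at $k=0$ equals $1$, and the largest eigenvalue strictly below $1$ is attained at $k = 1$ (and $k=N-1$), yielding $1 - \tfrac{1}{R}(1-\cos(2\pi/N))$. By Remark~\ref{rmk:eigenvalues_and_angles}, this is $c_F(\mathcal A, \aff(\mathcal B^{\text{lb}}))^2$, as claimed. The only nontrivial step is the partition identification in Step~1 (and, in particular, the easy dimension-counting that rules out a coarser partition); the rest is a routine spectral computation on a circulant matrix.
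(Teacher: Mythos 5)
Your proposal is correct and follows essentially the same route as the paper: both reduce the Friedrichs angle to the spectrum of the $N\times N$ matrix $\tfrac{1}{R}\bigl((R-1)I+\tfrac12 A_{\text{cyc}}\bigr)$ via Lemma~\ref{lem:angles_and_singular_values} and read off $1-\tfrac1R(1-\cos(2\pi/N))$ from the circulant eigenvalues. The paper merely writes $\aff(\mathcal B^{\text{lb}})$ directly as a nullspace and permutes columns to expose the cycle structure, whereas you compute the Gram matrix entrywise from the face partitions; the resulting matrices and the spectral computation are identical.
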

Around the optimal solution $0$, the polyhedra $\mathcal{A}$ and $\mathcal{B}^{\text{lb}}$ behave like subspaces, and it is possible to pick initializations $a_0 \in \mathcal{A}$ and $b_0 \in \mathcal{B}^{\text{lb}}$ such that the Friedrichs angle exactly determines the rate of convergence. That means $1 - 1/\kappa_*^2 = c_F(\mathcal A, \aff(\mathcal B^{\text{lb}}))^2$, and 
\begin{equation*}
  \|a_k\| = (1 - \tfrac{1}{R}( 1 - \cos(\tfrac{2\pi}{N})))^k \|a_0\| \quad\text{ and }\quad
  \|b_k\| = (1 - \tfrac{1}{R}( 1 - \cos(\tfrac{2\pi}{N})))^k \|b_0\| .
\end{equation*}
Bounding $1 - \cos(x) \le \frac12 x^2$ leads to the following lower bound on the rate.
\begin{theorem}\label{thm:lower_bound}
  There exists a decomposed function $F^{\text{lb}}$ and initializations for which the convergence rate of AP is at least
  $1 - \frac{2\pi^2}{N^2R}$.
\end{theorem}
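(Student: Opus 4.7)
The plan is to prove Lemma~\ref{lem:lower_bound_example}, verify that its value is actually attained as the per-step contraction rate of AP for suitable initializations, and then apply $1 - \cos x \le \tfrac12 x^2$. I would begin by making $\mathcal{B}^{\text{lb}}$ explicit: the base polytope of a single-edge cut $G_{xy}$ is the segment between $(1,-1)$ and $(-1,1)$ in coordinates $(s_x,s_y)$ with zeros elsewhere, and contains the origin in its relative interior. Because the edges defining $F_1^{\text{lb}}$ form a perfect matching on $V$, $B(F_1^{\text{lb}})$ is the product of $N/2$ disjoint such segments with
\[
  \aff(B(F_1^{\text{lb}})) = \{s \in \mathbb R^N \mid s_{2i-1} + s_{2i} = 0 \text{ for all } i\};
\]
the description of $\aff(B(F_2^{\text{lb}}))$ is analogous using the pairs $\{2i, 2i+1\} \pmod N$. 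For $r \ge 3$, $F_r^{\text{lb}}\equiv 0$ forces $B(F_r^{\text{lb}}) = \{0\}$, whose partition in Corollary~\ref{cor:base_polytope_faces_translated} is into singletons. These partitions give the rows of the matrix $T$ from Section~\ref{sec:connection_between_angles_and_matrices}.

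To prove Lemma~\ref{lem:lower_bound_example}, I would compute the eigenvalues of the $N \times N$ matrix $(ST^\top)(ST^\top)^\top$, which shares its nonzero spectrum with $(ST^\top)^\top(ST^\top)$. The pairs from $F_1^{\text{lb}}$ and $F_2^{\text{lb}}$ together list each edge of the $N$-cycle on $\{1,\ldots,N\}$ exactly once, while the singletons coming from $F_r$ ($r \ge 3$) contribute only to the diagonal. An entry-wise calculation then gives
\[
  (ST^\top)(ST^\top)^\top = \tfrac{R-1}{R}\, I_N + \tfrac{1}{2R}\, C,
\]
where $C$ is the adjacency matrix of the $N$-cycle. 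Since the eigenvalues of $C$ are $2\cos(2\pi k/N)$ for $k=0,\ldots,N-1$, those of $(ST^\top)(ST^\top)^\top$ are $1 - \tfrac{1}{R}(1-\cos(2\pi k/N))$. The value $1$ occurs only for $k=0$, and the next-largest value, attained at $k = \pm1$, is $1 - \tfrac{1}{R}(1-\cos(2\pi/N))$. Remark~\ref{rmk:eigenvalues_and_angles} then identifies this with $c_F(\mathcal A, \aff(\mathcal B^{\text{lb}}))^2$, proving Lemma~\ref{lem:lower_bound_example}.

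Because the origin lies in the relative interior of each segment $B(G_{xy})$, it lies in the relative interior of $\mathcal B^{\text{lb}}$, so on some neighborhood of $0$ the projection onto $\mathcal B^{\text{lb}}$ agrees with the projection onto $\aff(\mathcal B^{\text{lb}})$. For sufficiently small initializations $(a_0, b_0)$ the AP iterates remain in this neighborhood and therefore coincide with AP iterates between the two subspaces $\mathcal A$ and $\aff(\mathcal B^{\text{lb}})$, for which the Friedrichs angle governs the rate exactly (cf.~\cite[Theorem~9.8]{Deutsch2001Best}). Choosing $a_0, b_0$ along the principal direction corresponding to the eigenvalue $1 - \tfrac{1}{R}(1-\cos(2\pi/N))$ yields $\|a_k\| = (1 - \tfrac{1}{R}(1-\cos(2\pi/N)))^k\|a_0\|$ and likewise for $\|b_k\|$. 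Applying $1 - \cos x \le \tfrac12 x^2$ at $x = 2\pi/N$ then yields the claimed rate $1 - \tfrac{2\pi^2}{N^2R}$.

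The main obstacle is the calculation identifying $(ST^\top)(ST^\top)^\top$ as an affine function of the cycle adjacency matrix, which requires carefully tracking the contributions of the two matchings and the trivial $F_r$. The broader interpretation in Section~\ref{sec:bounding_eigenvalues} as the normalized Laplacian of a graph $G$ (which here contains $C_N$) provides useful intuition. A secondary subtlety is the attainment argument: we need the iterates to remain in the relative interior of $\mathcal B^{\text{lb}}$, which is guaranteed by choosing sufficiently small initializations aligned with the extremal principal direction.
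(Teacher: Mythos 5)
Your proposal is correct and follows essentially the same route as the paper: compute $c_F(\mathcal A,\aff(\mathcal B^{\text{lb}}))$ via the spectrum of a cycle-related matrix (your $\tfrac{R-1}{R}I_N+\tfrac{1}{2R}C$ is exactly the paper's $\tfrac1R(T_{\text{lb},0}^\top T_{\text{lb},0}+(R-2)I_N)$ after the paper's column permutation, just assembled as the $N\times N$ Gram matrix instead), then argue that near the origin $\mathcal B^{\text{lb}}$ coincides with its affine hull so that an initialization along the extremal principal direction attains the subspace rate $c_F^2$ exactly, and finish with $1-\cos x\le \tfrac12 x^2$. The attainment step matches the paper's Lemma~\ref{lem:lower_bound_vector_spaces} and Corollary~\ref{cor:exact_rate_when_same_as_vspace_near_origin}.
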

This theoretical bound can also be observed empirically (Figure~\ref{fig:lower_bound_plot} in Appendix~\ref{app:lower_bound}).

\section{Convergence of the Primal Objective}
We have shown that AP generates a sequence of points $\{a_k\}_{k \ge 0}$ and $\{b_k\}_{k \ge 0}$ in $\mathbb R^{NR}$ such that $(a_k,b_k) \to (a_*,b_*)$ linearly, where $(a_*,b_*)$ minimizes the objective in Problem~\eqref{eq:dual_problem}. In this section, we show that this result also implies the linear convergence of the objective in Problem~\eqref{eq:prox} and of the original discrete objective in Problem~\eqref{eq:setmin}. The proofs may be found in Appendix~\ref{app:convergence_discrete}.

Define the matrix $\Gamma = -R^{1/2}S$, where $S$ is the matrix defined in Equation~\eqref{eq:def_matrix_S}. Multiplication by $\Gamma$ maps a vector $(w_1, \ldots, w_R)$ to $-\sum_r w_r$, where $w_r \in \mathbb R^N$ for each $r$.
Set $x_k = \Gamma b_k$ and $x_* = \Gamma b_*$. As shown in \citet{Jegelka2013Reflection}, Problem~\eqref{eq:prox} is minimized by $x_*$.
\begin{proposition} \label{prop:primal_convergence}
We have $f(x_k)+\frac12\|x_k\|^2 \to f(x_*)+\frac12\|x_*\|^2$ linearly with rate $1 - \frac{1}{N^2R^2}$.
\end{proposition}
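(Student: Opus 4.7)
The plan is to push the linear convergence of the dual iterates $b_k \to b_*$ from Theorem~\ref{thm:upper_bound} through the linear map $\Gamma$ to obtain linear convergence of the primal iterates $x_k \to x_*$, and then pass from iterate convergence to primal-objective convergence using a standard subgradient bound. Write $g(x) = f(x) + \tfrac12\|x\|^2$, which is convex with unique minimizer $x_*$; the claim is that $g(x_k) - g(x_*) \to 0$ linearly at rate $1-\tfrac{1}{N^2R^2}$.

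First I would control $\|x_k - x_*\|$. Since $\Gamma = -R^{1/2}S$ and $SS^\top = I_N$, we have $\Gamma\Gamma^\top = R\, I_N$, so $\|\Gamma\|_{\mathrm{op}} = \sqrt{R}$ and
\[ \|x_k - x_*\| = \|\Gamma(b_k - b_*)\| \le \sqrt{R}\,\|b_k - b_*\| \le 2\sqrt{R}\,\|b_0 - b_*\|\,\bigl(1 - \tfrac{1}{N^2R^2}\bigr)^k \]
by Theorem~\ref{thm:upper_bound}. Linear convergence $x_k \to x_*$ at the desired rate is therefore immediate.

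Next I would upper-bound the primal gap linearly in $\|x_k - x_*\|$. By convexity of $g$ and optimality of $x_*$, for any $v_k \in \partial g(x_k)$,
\[ 0 \le g(x_k) - g(x_*) \le v_k^\top(x_k - x_*) \le \|v_k\|\,\|x_k - x_*\|. \]
Decomposing $\partial g(x_k) = \partial f(x_k) + x_k$, the set $\partial f(x_k) = \sum_r \partial f_r(x_k)$ is contained in $\{\sum_r s_r : s_r \in B(F_r)\}$, which is bounded by a constant $L_f$ depending only on $F$ because each base polytope $B(F_r)$ is compact. Since $x_k \to x_*$, $\|x_k\|$ is also uniformly bounded along the trajectory, so there exists $C < \infty$ with $\|v_k\| \le C$ for all $k$. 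Chaining gives $g(x_k) - g(x_*) \le 2C\sqrt{R}\,\|b_0 - b_*\|\bigl(1 - \tfrac{1}{N^2R^2}\bigr)^k$, proving the claim.

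I expect the only mild subtlety to be the uniform boundedness of the subgradients $v_k$, but this is essentially routine: it follows directly from the characterization of $\partial f_r(x)$ as a face of the compact polytope $B(F_r)$, together with the fact that the iterates $x_k$ stay inside a bounded neighborhood of $x_*$. Everything else reduces to linearity (and hence Lipschitz continuity) of $\Gamma$ combined with Theorem~\ref{thm:upper_bound}.
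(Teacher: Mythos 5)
Your proposal is correct and follows essentially the same route as the paper: map the dual rate through $\Gamma$ (with $\|\Gamma\|_{\mathrm{op}}=\sqrt R$) to get $\|x_k-x_*\|\le \sqrt R\,\|b_k-b_*\|$, then bound the primal gap by a subgradient inequality with subgradients bounded via the compactness of the base polytope. The only difference is cosmetic: the paper treats $f$ and $\tfrac12\|\cdot\|^2$ separately and makes the subgradient bound explicit as $3F_{\max}$ (using $\|s\|\le\|s\|_1\le 3F_{\max}$ for $s\in B(F)$ and $x_k,x_*\in -B(F)$), whereas you bundle them into one subgradient of $g$ with an implicit constant $C$.
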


This linear rate of convergence translates into a linear rate for the original discrete problem. 

\begin{theorem} \label{thm:rate_original_discrete_problem}
  Choose $A_* \in \argmin_{A \subseteq V} F(A)$. Let $A_k$ be the suplevel set of $x_k$ 
with smallest value of $F$. Then $F(A_k) \to F(A_*)$ linearly with rate $1 - \frac{1}{2N^2R^2}$.
\end{theorem}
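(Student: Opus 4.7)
My strategy is to translate the primal convergence of Proposition~\ref{prop:primal_convergence} into decay of $\|x_k - x_*\|$ via strong convexity, and then exploit the fact that thresholding the prox minimizer $x_*$ recovers a discrete minimizer of $F$, so the same must hold for small perturbations of $x_*$.

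First I would note that $g(x) := f(x) + \tfrac{1}{2}\|x\|^2$ is $1$-strongly convex, since $f$ is convex (it is a Lov\'asz extension) and $\tfrac{1}{2}\|x\|^2$ is $1$-strongly convex. Because $x_*$ is the unique minimizer of $g$, strong convexity gives $\tfrac{1}{2}\|x_k - x_*\|^2 \le g(x_k) - g(x_*)$, and combining with Proposition~\ref{prop:primal_convergence} yields $\|x_k - x_*\| \le C_1(1 - \tfrac{1}{N^2R^2})^{k/2}$ for some constant $C_1$. The elementary inequality $\sqrt{1-a} \le 1 - a/2$ for $a \in [0,1]$ then converts this to $\|x_k - x_*\| \le C_1(1 - \tfrac{1}{2N^2R^2})^k$, which already exhibits the target rate; it remains to bound $F(A_k) - F(A_*)$ by a constant multiple of $\|x_k - x_*\|$.

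The main step is to show that once $\|x_k - x_*\|_\infty$ is small enough, some suplevel set of $x_k$ is itself a minimizer of $F$. Set $\epsilon := \tfrac{1}{2}\min\{|x_{*,v}| : v \in V,\, x_{*,v} \neq 0\}$, with $\epsilon = +\infty$ when $x_* = 0$. If $\|x_k - x_*\|_\infty < \epsilon$, I would argue by coordinate-wise casework that the suplevel set $U_{x_k}(\epsilon) := \{v : x_{k,v} \ge \epsilon\}$ coincides with $\{v : x_{*,v} > 0\}$: coordinates with $x_{*,v} \ge 2\epsilon$ stay above $\epsilon$, coordinates with $x_{*,v} \le -2\epsilon$ stay below $-\epsilon$, and coordinates with $x_{*,v} = 0$ remain in $(-\epsilon, \epsilon)$. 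Since thresholding $x_*$ at zero recovers the indicator of a minimizer of $F$ (the property stated just before Lemma~\ref{lem:duals}), this forces $F(U_{x_k}(\epsilon)) = F(A_*)$, and because $A_k$ is defined as the suplevel set of $x_k$ with smallest $F$-value, we conclude $F(A_k) = F(A_*)$ in this regime.

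For the remaining iterates, with $\|x_k - x_*\|_\infty \ge \epsilon$, the trivial bound $F(A_k) - F(A_*) \le 2F_{\max}$ is at most $\tfrac{2F_{\max}}{\epsilon}\|x_k - x_*\|$, since $\|x_k - x_*\| \ge \|x_k - x_*\|_\infty \ge \epsilon$. Combining the two regimes yields
\[
F(A_k) - F(A_*) \le \tfrac{2 F_{\max} C_1}{\epsilon}\bigl(1 - \tfrac{1}{2N^2R^2}\bigr)^k
\]
for every $k$ (the degenerate case $x_* = 0$ is handled separately, since then $\emptyset$ is both a minimizer of $F$ and a suplevel set of every $x_k$, so $F(A_k) = F(A_*)$ identically). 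The main obstacle is the gap argument in the third paragraph: it requires choosing the threshold carefully in terms of the smallest nonzero coordinate of $x_*$ and relies crucially on the structural fact from the prox formulation that $\{v : x_{*,v} > 0\}$ is a minimizer of $F$.
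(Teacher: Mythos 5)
Your proof is correct, but it takes a genuinely different route from the paper. The paper converts the primal suboptimality $g(x_k)-g(x_*)$ directly into a discrete gap via the duality-gap bound $F(A_k) - s_-(V) \le \sqrt{N\epsilon/2}$ of \citet[Proposition~10.5]{Bach2013Learning}; the square root is what degrades the rate from $1-\tfrac{1}{N^2R^2}$ to $1-\tfrac{1}{2N^2R^2}$, and the resulting constant $\sqrt{6F_{\max}NR^{1/2}\|b_0-b_*\|}$ is explicit in natural problem quantities. You instead pass through strong convexity of $f + \tfrac12\|\cdot\|^2$ to get $\|x_k-x_*\|$ decaying at rate $(1-\tfrac{1}{N^2R^2})^{k/2}$ (the same square-root loss, appearing in a different place), and then run an identification argument: once $\|x_k-x_*\|_\infty$ drops below half the smallest nonzero entry of $x_*$, the suplevel set at threshold $\epsilon$ coincides with $\{v : x_{*,v}>0\}$, which is a minimizer by the Fujishige/Bach thresholding result the paper already invokes, so $F(A_k)=F(A_*)$ exactly. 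Your argument is sound (including the degenerate case $x_*=0$ and the crude bound in the early regime), and it buys a stronger qualitative conclusion — finite-time exact identification of the optimal value — that the paper's proof does not give. The trade-off is that your constant scales like $F_{\max}/\min\{|x_{*,v}| : x_{*,v}\neq 0\}$, a quantity not controlled by $N$, $R$, $F_{\max}$, or the initialization, and which can be arbitrarily large across instances; the paper's explicit constant is what feeds into the iteration-count bound in the running-times subsection, so your version would not support that downstream estimate, even though it fully establishes the theorem as stated.
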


\section{Discussion}
In this work, we analyze projection methods for parallel SFM and give upper and lower bounds on the linear rate of convergence.
 This means that the number of iterations required for an accuracy of $\epsilon$ is logarithmic in $1/\epsilon$, not linear as in previous work \cite{stobbe10efficient}. 
Our rate is uniform over all submodular functions. 
Moreover, our proof highlights how the number $R$ of components and the facial structure of $\mathcal{B}$ affect the convergence rate.
These insights may serve as guidelines when working with projection algorithms and aid in the analysis of special cases. 
For example, reducing $R$ is often possible. Any collection of $F_r$ that have disjoint support, such as the cut functions corresponding to the rows or columns of a grid graph, can be grouped together without making the projection harder. 

Our analysis also shows the effects of additional properties of $F$.
For example, suppose that $F$ is \emph{separable}, that is, $F(V) = F(S) + F(V \backslash S)$ for  some nonempty $S \subsetneq V$. 
Then the subsets $A_{rm} \subseteq V$ defining the relevant faces of $\mathcal B$ satisfy either $A_{rm} \subseteq S$ or $A_{rm} \subseteq S^c$ \cite{Bach2013Learning}. This makes $G$ in Section~\ref{sec:bounding_eigenvalues} disconnected, and as a result,
the $N$ in Theorem~\ref{thm:upper_bound} gets replaced by $\max\{|S|,|S^c|\}$ for an improved rate.
This applies without the user needing to know $S$ when running the algorithm.


A number of future directions suggest themselves. 
For example, \citet{Jegelka2013Reflection} also considered the related Douglas--Rachford (DR) algorithm. 
DR between subspaces converges linearly with rate $c_F$ \cite{Bauschke2014Rate}, as opposed to $c_F^2$ for AP. 
We suspect that our approach may be modified to analyze DR between polyhedra. 
Further questions include the extension to cyclic updates (instead of parallel ones), multiple polyhedra, and stochastic algorithms.

%


\paragraph{Acknowledgments.} We would like to thank M\u{a}d\u{a}lina Persu for suggesting the use of Cheeger's inequality. 
This research is supported in part by NSF CISE Expeditions Award CCF-1139158, LBNL Award 7076018, and DARPA XData Award FA8750-12-2-0331, and  gifts from Amazon Web Services, Google, SAP,  The Thomas and Stacey Siebel Foundation, Apple, C3Energy, Cisco, Cloudera, EMC, Ericsson, Facebook, GameOnTalis, Guavus, HP, Huawei, Intel, Microsoft, NetApp, Pivotal, Splunk, Virdata, VMware, WANdisco, and Yahoo!. 
This work is supported in part by the Office of Naval Research under grant number N00014-11-1-0688, 
the US ARL and the US ARO under grant number W911NF-11-1-0391, 
and the NSF under grant number DGE-1106400.

{\small
\bibliographystyle{abbrvnat}
\bibliography{refs}
}

\appendix

\section{Upper Bound Results}\label{app:upperbound}

\subsection{Proof of Theorem~\ref{thm:convex_set_kappa_rate}} \label{sec:proof_of_convex_kappa_rate}
  For the proof of this theorem, we will need the fact that projection maps are firmly nonexpansive, that is, for a closed convex nonempty subset $C \subseteq \mathbb R^D$, we have
\begin{equation*}
  \|\Pi_Cx - \Pi_Cy\|^2 + \|(x - \Pi_Cx) - (y - \Pi_Cy)\|^2 \le \|x-y\|^2
\end{equation*}
for all $x, y \in \mathbb R^D$. Now, suppose that $\kappa_* < \infty$. Let $e = \Pi_E p_k$ and note that $v = \Pi_Qe - e$ and that $\Pi_Qe \in H$. We have
\begin{align*}
  \kappa_*^{-2} d(p_k, E)^2 & \le d(p_k ,Q')^2 \\
  & \le \|p_k - \Pi_Q p_k + v\|^2 \\
  & \le \|(p_k - \Pi_Q p_k) - (e - \Pi_Q e)\|^2 \\
  & \le \|p_k - e\|^2 - \|\Pi_Qp_k - \Pi_Qe\|^2 \\
  & \le d(p_k, E)^2 - d(q_k, H)^2 .
\end{align*}
It follows that $d(q_k,H) \le (1 - \kappa_*^{-2})^{1/2} d(p_k,E)$. Similarly, we have $d(p_{k+1},E) \le (1 - \kappa_*^{-2})^{1/2} d(q_k,H)$. When combining these, induction shows that
\begin{align*}
  d(p_k, E) & \le ( 1 - \kappa_*^{-2} )^k d(p_0, E) \\
  d(q_k, H) & \le ( 1 - \kappa_*^{-2} )^k d(q_0, H) .
\end{align*}
As shown in \citep[Theorem~3.3]{Bauschke1993Convergence},
the above implies that $p_k \to p_* \in E$ and $q_k \to q_* \in H$ and that
\begin{align*}
  \|p_k-p_*\| & \le 2\|p_0-p_*\|( 1 - \kappa_*^{-2} )^k  \\
  \|q_k-q_*\| & \le 2\|q_0-q_*\|( 1 - \kappa_*^{-2} )^k  .
\end{align*}

\subsection{Connection Between $\kappa$ and $c_F$ in the Subspace Case} \label{sec:connection_between_kappa_cf_in_subspace_case}

In this section, we introduce a simple lemma connecting $\kappa$ and $c_F$ in the case of subspaces $U$ and $V$. We will use this lemma in several subsequent proofs.
\begin{lemma} \label{lem:kappa_bound_in_vspace_case}
  Let $U$ and $V$ be subspaces and suppose $u \in U \cap (U \cap V)^{\perp}$ and that $u \ne 0$. Then
\vspace{-2mm}
\begin{enumerate}[label=(\alph*),noitemsep]
  \item \label{lem:kappa_bound_in_vspace_case:a} $\|\Pi_V u\| \le c_F(U,V)\|u\|$
  \item \label{lem:kappa_bound_in_vspace_case:b} $\kappa(u) \le (1 - c_F(U,V)^2)^{-1/2}$
  \item \label{lem:kappa_bound_in_vspace_case:c} $\kappa(u) = (1 - c_F(U,V)^2)^{-1/2}$ if and only if $\|\Pi_V u\| = c_F(U,V)\|u\|$.
  \end{enumerate}
\vspace{-3mm}
\end{lemma}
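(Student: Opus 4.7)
The plan is to reduce everything to the simple identity $\kappa(u) = \|u\|/\|u - \Pi_V u\|$ that holds in the subspace case, and then extract (a) directly from the definition of $c_F$ by projecting into $V$ along the right orthogonal decomposition.

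First, for part (a), I would write $w := \Pi_V u$ and decompose it as $w = w_1 + w_2$ along the orthogonal splitting $V = (U \cap V) \oplus (V \cap (U \cap V)^{\perp})$. Since $u \in (U \cap V)^{\perp}$ by hypothesis and $w_1 \in U \cap V$, we get $\langle u, w_1 \rangle = 0$. Using the standard orthogonal-projection identity $\langle u, \Pi_V u \rangle = \|\Pi_V u\|^2$, this yields $\|w\|^2 = \langle u, w_2 \rangle$. Now $u \in U \cap (U \cap V)^{\perp}$ and $w_2 \in V \cap (U \cap V)^{\perp}$, so plugging $u/\|u\|$ and $w_2/\|w_2\|$ into the definition \eqref{eq:friedrichs_angle} of $c_F$ (assuming $w_2 \ne 0$; the case $w_2 = 0$ is immediate) gives $\langle u, w_2 \rangle \le c_F(U,V)\|u\|\|w_2\|$. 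Combining this with $\|w_2\| \le \|w\|$ and dividing by $\|w\|$ yields $\|\Pi_V u\| \le c_F(U,V)\|u\|$.

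Next, for part (b), I would unwind the definition of $\kappa$ in the subspace setting. Here $P = U$ and $Q = V$ both contain $0$, so $v = 0$, $E = H = U \cap V$, and $Q' = V$. Since $u \in U$ we have $d(u,U) = 0$, and since $u \in (U \cap V)^{\perp}$ we have $d(u, U \cap V) = \|u\|$. Thus $\kappa(u) = \|u\|/d(u,V) = \|u\|/\|u - \Pi_V u\|$. Applying Pythagoras, $\|u - \Pi_V u\|^2 = \|u\|^2 - \|\Pi_V u\|^2$, and then using part (a) gives $\|u - \Pi_V u\|^2 \ge (1 - c_F(U,V)^2)\|u\|^2$, which is equivalent to the claimed bound on $\kappa(u)$.

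For part (c), I would simply trace the chain of inequalities in (b): the step $\|u - \Pi_V u\|^2 \ge (1 - c_F(U,V)^2)\|u\|^2$ is an equality exactly when $\|\Pi_V u\| = c_F(U,V)\|u\|$, and this is the only inequality used. So the characterization of equality in $\kappa(u) = (1 - c_F(U,V)^2)^{-1/2}$ is exactly $\|\Pi_V u\| = c_F(U,V)\|u\|$.

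There is no real obstacle here; the only subtle point is the orthogonal decomposition of $\Pi_V u$ in part (a), which ensures that the inner product $\langle u, \Pi_V u\rangle$ only picks up the component of $\Pi_V u$ lying in $V \cap (U \cap V)^{\perp}$ so that the Friedrichs angle becomes directly applicable. Everything else is a short computation using that $u \in U$, $u \perp (U \cap V)$, and Pythagoras.
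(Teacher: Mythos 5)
Your proposal is correct and follows essentially the same route as the paper: part (a) comes from plugging (the relevant component of) $\Pi_V u$ into the definition of $c_F$, and parts (b) and (c) follow from the identity $\kappa(u) = (1 - \|\Pi_V u\|^2/\|u\|^2)^{-1/2}$, which you derive explicitly and the paper states as an observation. Your explicit orthogonal decomposition $w = w_1 + w_2$ in (a) is just a more careful version of the paper's step, which implicitly uses that $\Pi_V u \in V \cap (U \cap V)^{\perp}$.
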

\begin{proof}
Part~\ref{lem:kappa_bound_in_vspace_case:a} follows from the definition of $c_F$. Indeed,
\begin{equation*}
  c_F(U,V) \ge \frac{u^\top  (\Pi_V u)}{\|u\|\|\Pi_Vu\|} = \frac{\|\Pi_V u\|^2}{\|u\| \|\Pi_V u\|}  = \frac{\|\Pi_V u\|}{\|u\|} .
\end{equation*}
Part~\ref{lem:kappa_bound_in_vspace_case:b} follows from Part~\ref{lem:kappa_bound_in_vspace_case:a} and the observation that $\kappa(u) = (1 - \|\Pi_V u\|^2/\|u\|^2)^{-1/2}$.
Part~\ref{lem:kappa_bound_in_vspace_case:c} follows from the same observation.
\end{proof}

\subsection{Proof of Lemma~\ref{lem:kappa_increasing}} \label{sec:proof_of_lem_kappa_increasing}

  \begin{figure} 
\centering

\definecolor{qqwuqq}{rgb}{0.0,0.39215686274509803,0.0}
\definecolor{qqqqff}{rgb}{0.0,0.0,1.0}
\definecolor{zzttqq}{rgb}{0.6,0.2,0.0}
\begin{tikzpicture}[line cap=round,line join=round,>=triangle 45,x=1.0cm,y=1.0cm]
\fill[color=zzttqq,fill=zzttqq,fill opacity=0.1] (3.42,2.8600000000000003) -- (-0.5000000000000009,2.86) -- (-2.56,4.9) -- (-2.7,5.54) -- (5.3999999999999995,5.54) -- cycle;
\fill[color=zzttqq,fill=zzttqq,fill opacity=0.1] (1.92,2.8600000000000003) -- (4.26,2.8600000000000003) -- (6.299999999999998,-0.9199999999999997) -- (-3.8,-0.9199999999999997) -- (-2.3,2.0800000000000005) -- cycle;
\draw[color=qqwuqq,fill=qqwuqq,fill opacity=0.1] (-1.1214160600333374,3.821131936999301) -- (-0.8603395886883781,3.712324418370924) -- (-0.7515320700600009,3.973400889715883) -- (-1.0126085414049601,4.08220840834426) -- cycle; 
\draw [dash pattern=on 5pt off 5pt,color=zzttqq] (3.42,2.8600000000000003)-- (-0.5000000000000009,2.86);
\draw [dash pattern=on 5pt off 5pt,color=zzttqq] (-0.5000000000000009,2.86)-- (-2.56,4.9);
\draw [dash pattern=on 5pt off 5pt,color=zzttqq] (-2.56,4.9)-- (-2.7,5.54);
\draw [dash pattern=on 5pt off 5pt,color=zzttqq] (5.3999999999999995,5.54)-- (3.42,2.8600000000000003);
\draw [line width=2.8000000000000003pt] (1.9200000000000002,2.8600000000000003)-- (3.42,2.8600000000000003);
\draw [dash pattern=on 5pt off 5pt,color=zzttqq] (1.92,2.8600000000000003)-- (4.26,2.8600000000000003);
\draw [dash pattern=on 5pt off 5pt,color=zzttqq] (4.26,2.8600000000000003)-- (6.299999999999998,-0.9199999999999997);
\draw [dash pattern=on 5pt off 5pt,color=zzttqq] (-3.8,-0.9199999999999997)-- (-2.3,2.0800000000000005);
\draw [dash pattern=on 5pt off 5pt,color=zzttqq] (-2.3,2.0800000000000005)-- (1.92,2.8600000000000003);
\draw (-2.2585437586256125,4.601470518250607)-- (1.92,2.8600000000000003);
\draw (-1.8092555880084846,2.170706313116915)-- (1.92,2.8600000000000003);
\draw (-2.2585437586256125,4.601470518250607)-- (-1.8092555880084846,2.170706313116915);
\draw (-1.8092555880084846,2.170706313116915)-- (-1.0126085414049601,4.08220840834426);
\draw (-1.8092555880084846,2.170706313116915)-- (-0.8163838128200795,3.1733121253169716);
\begin{scriptsize}
\draw[color=black] (3.04,3.12) node {$E$};
\draw [fill=qqqqff] (1.92,2.8600000000000003) circle (1.5pt);
\draw[color=qqqqff] (2.0,3.14) node {$e$};
\draw[color=zzttqq] (1.44,4.4) node {$P$};
\draw[color=zzttqq] (1.44,0.9400000000000001) node {$Q'$};
\draw [fill=qqqqff] (-2.2585437586256125,4.601470518250607) circle (1.5pt);
\draw[color=qqqqff] (-2.12,4.88) node {$p$};
\draw [fill=qqqqff] (-1.8092555880084846,2.170706313116915) circle (1.5pt);
\draw[color=qqqqff] (-1.78,1.9) node {$q$};
\draw [fill=qqqqff] (-1.0126085414049601,4.08220840834426) circle (1.5pt);
\draw[color=qqqqff] (-0.76,4.36) node {$p''$};
\draw [fill=qqqqff] (-0.8163838128200795,3.1733121253169716) circle (1.5pt);
\draw[color=qqqqff] (-0.62,3.4600000000000004) node {$p'$};
\end{scriptsize}
\end{tikzpicture}

\caption{Illustration of the proof of Lemma~\ref{lem:kappa_increasing}.}
\label{fig:p_and_qprime_active}
  \end{figure}

It suffices to prove the statement for $p \in P \backslash E$. For $p \in P \backslash E$, define $q = \Pi_{Q'}p$, $e = \Pi_Eq$, and $p'' = \Pi_{[p,e]}q$, where $[p,e]$ denotes the line segment between $p$ and $e$ (which is contained in $P$ by convexity). 
See Figure~\ref{fig:p_and_qprime_active} for a graphical depiction. 
If $q \in E$, then $\kappa(p) = 1$. So we may assume that $q \notin E$ which also implies that $d(p'',E) > 0$ and $d(\Pi_Pq,E) > 0$. 
We have
\begin{equation} \label{eq:lemma_chain_inequalities}
  \kappa(p) = \frac{d(p,E)}{d(p,Q')} \le \frac{\|p-e\|}{\|p-q\|} \le \frac{\|q-e\|}{\|q-p''\|} \le \frac{d(q,E)}{d(q,P)} = \kappa(q) .
\end{equation}
The first inequality holds because $d(p,E) \le \|p-e\|$ and $d(p,Q') = \|p-q\|$. 
The middle inequality holds because the area of the triangle with vertices $p$, $q$, and $e$ can be expressed as both $\frac12 \|p-e\| \|q-p''\|$ and $\frac12 \|p-q\| \|q-e\| \sin\theta$, where $\theta$ is the angle between vectors $p-q$ and $e-q$, so
\begin{equation*}
  \|p-e\|   \|q-p''\| = \|p-q\| \|q-e\| \sin\theta \le \|p-q\| \|q-e\| .
\end{equation*}
The third inequality holds because $\|q-e\| = d(q,E)$ and $\|q-p''\| \ge d(q,P)$. 
The chain of inequalities in Equation~\eqref{eq:lemma_chain_inequalities} prove the lemma.

\subsection{Proof of Proposition~\ref{prop:bound_rate_by_angle}} \label{sec:proof_of_prop_bound_rate_by_angle}

Suppose that $p \in P \backslash E$ (the case $q \in Q' \backslash E$ is the same), and let $e = \Pi_Ep$. If $\kappa(p) = 1$, the statement is evident, so we may assume that $\kappa(p) > 1$. We will construct sequences of polyhedra
\renewcommand{\tabcolsep}{2pt}
\begin{center}
\begin{tabular}{ccccccc}
  $P$ & $\supseteq$ & $P_1$ & $\supseteq$ & $\cdots$ & $\supseteq$ & $P_J$ \\
  $Q'$ & $\supseteq$ & $Q'_1$ & $\supseteq$ & $\cdots$ & $\supseteq$ & $Q'_J$ 
\end{tabular} .
\end{center}
where $P_{j+1}$ is a face of $P_j$ and $Q'_{j+1}$ is a face of $Q'_j$ for $1 \le j \le J-1$. Either $\dim(\aff(P_{j+1})) < \dim(\aff(P_j))$ or $\dim(\aff(Q'_{j+1})) < \dim(\aff(Q'_j))$ will hold. We will further define $E_j = P_j \cap Q'_j$, which will contain $e$, so that we can define
\begin{equation*}
  \kappa_j(x) = \frac{d(x,E_j)}{\max\{d(x,P_j),d(x,Q'_j)\}}
\end{equation*}
for $x \in \mathbb R^D \backslash E_j$ (this is just the function $\kappa$ defined for the polyhedra $P_j$ and $Q'_j$). Our construction will yield points $p_j \in P_j$, and $q_j \in Q'_j$ such that $p_j \in \relint(P_j) \backslash E_j$, $q_j \in \relint(Q'_j) \backslash E_j$, and $q_j = \Pi_{Q'_j}p_j$ for each $j$. Furthermore, we will have
\begin{equation} \label{eq:kappa_chain}
  \kappa(p) \le \kappa_1(p_1) \le \cdots \le \kappa_J(p_J) .
\end{equation}

Now we describe the construction. For any $t \in [0,1]$, define $p^t = (1-t)p + te$ to be the point obtained by moving $p$ by the appropriate amount toward $e$. Note that $t \mapsto \kappa(p^t)$ is a nondecreasing function on the interval $[0,1)$. Choose $\epsilon > 0$ sufficiently small so that every face of either $P$ or $Q'$ that intersects $B_{\epsilon}(e)$, the ball of radius $\epsilon$ centered on $e$, necessarily contains $e$. Now choose $0 \le t_0 < 1$ sufficiently close to $1$ so that $\|p^{t_0} - e\| < \epsilon$. It follows that $e$ is contained in the face of $P$ whose relative interior contains $p^{t_0}$. It further follows that $e$ is contained in the face of $Q'$ whose relative interior contains $\Pi_{Q'}p^{t_0}$ because
\begin{equation*}
  \|\Pi_{Q'}p^{t_0} - e\| =\|\Pi_{Q'}p^{t_0} - \Pi_{Q'}e\| \le \|p^{t_0}-e\| < \epsilon .
\end{equation*}

To initialize the construction, set
\begin{align*}
  p_1 & = p^{t_0} \\
  q_1 & = \Pi_{Q'}p^{t_0} ,
\end{align*}
and let $P_1$ and $Q'_1$ be the unique faces of $P$ and $Q'$ respectively such that $p_1 \in \relint(P_1)$ and $q_1 \in \relint(Q'_1)$ (the relative interiors of the faces of a polyhedron partition that polyhedron \cite[Theorem~2.2]{Burke1988Identification}). Note that $q_1 \notin E$ because $\kappa(p_1) \ge \kappa(p) > 1$. Note that $e \in E_1 = P_1 \cap Q'_1$ so that
\begin{equation*}
  \kappa(p) \le \kappa(p_1) = \frac{d(p_1,E)}{d(p_1,Q')} = \frac{\|p_1-e\|}{\|p_1-q_1\|} = \frac{d(p_1,E_1)}{d(p_1,Q'_1)} = \kappa_1(p_1) .
\end{equation*}
Now, inductively assume that we have defined $P_j$, $Q'_j$, $p_j$, and $q_j$ satisfying the stated properties. 
Generate the sequences $\{x_k\}_{k \ge 0}$ and $\{y_k\}_{k \ge 0}$ with $x_k \in P_j$ and $y_k \in Q'_j$ by running AP between the polyhedra $P_j$ and $Q'_j$ initialized with $x_0 = p_j$. There are two possibilities, either $x_k \in \relint(P_j)$ and $y_k \in \relint(Q'_j)$ for every $k$, or there is some $k$ for which either $x_k \notin \relint(P_j)$ or $y_k \notin \relint(Q'_j)$. Note that $P_j$ and $Q_j'$ intersect and that AP between them will not terminate after a finite number of steps.

Suppose that $x_k \in \relint(P_j)$ and $y_k \in \relint(Q'_j)$ for every $k$. Then set $J = j$ and terminate the procedure. Otherwise, choose $k'$ such that either $x_{k'} \notin \relint(P_j)$ or $y_{k'} \notin \relint(Q'_j)$. Now set $p_{j+1} = x_{k'}$ and $q_{j+1} = y_{k'}$. Let $P_{j+1}$ and $Q'_{j+1}$ be the unique faces of $P_j$ and $Q'_j$ respectively such that $p_{j+1} \in \relint(P_{j+1})$ and $q_{j+1} \in \relint(Q'_{j+1})$. Note that $p_{j+1},q_{j+1} \notin E_{j+1} = P_{j+1} \cap Q'_{j+1}$ and $e \in E_{j+1}$. We have
\begin{equation*}
  \kappa_j(p_j) < \kappa_j(p_{j+1}) = \frac{d(p_{j+1}, E_j)}{d(p_{j+1}, Q'_j)} = \frac{d(p_{j+1}, E_j)}{\|p_{j+1} - q_{j+1}\|} \le \frac{d(p_{j+1}, E_{j+1})}{d(p_{j+1}, Q'_{j+1})} = \kappa_{j+1}(p_{j+1}) .
\end{equation*}
The preceding work shows the inductive step. Note that if $P_{j+1} \ne P_j$ then $\dim(\aff(P_{j+1})) < \dim(\aff(P_j))$ and if $Q'_{j+1} \ne Q'_j$ then $\dim(\aff(Q'_{j+1})) < \dim(\aff(Q'_j))$. One of these will hold, so the induction will terminate after a finite number of steps.

We have produced the sequence in Equation~\eqref{eq:kappa_chain} and we have created $p_J$, $P_J$, and $Q'_J$ such that AP between $P_J$ and $Q'_J$, when initialized at $p_J$, generates the same sequence of points as AP between $\aff(P_J)$ and $\aff(Q'_J)$. Using this fact, along with \citep[Theorem~9.3]{Deutsch1994Rate}, we see that $\Pi_{\aff(P_J) \cap \aff(Q'_J)}p_J \in E_J$. Using this, along with Lemma~\ref{lem:kappa_bound_in_vspace_case}\ref{lem:kappa_bound_in_vspace_case:b}, we see that
\begin{equation} \label{eq:bound_on_p_J}
  \kappa_J(p_J) \le (1 - c_F(\affnot(P_J),\affnot(Q'_J))^2)^{-1/2} .
\end{equation}
Equations~\eqref{eq:bound_on_p_J} and~\eqref{eq:kappa_chain} prove the result. Note that $P_J$ and $Q'_J$ are faces of $P$ and $Q'$ respectively. We can switch between faces of $Q'$ and faces of $Q$ because doing so amounts to translating by $v$ which does not affect the angles.

\subsection{Proof of Lemma~\ref{lem:angles_and_singular_values}} \label{sec:proof_of_lem_angles_and_singular_values}

We have
\begin{align*}
  c_F(\nullspace(S),\nullspace(T)) & = c_F(\range(S^{\top})^{\perp},\range(T^{\top})^{\perp}) \\
  & = c_F(\range(S^{\top}),\range(T^{\top})) ,
\end{align*}
where the first equality uses the fact that $\nullspace(W) = \range(W^{\top})^{\perp}$ for matrices $W$, and the second equality uses the fact that $c_F(U^{\perp},V^{\perp}) = c_F(U,V)$ for subspaces $U$ and $V$ \cite[Fact~2.3]{Bauschke2014Rate}.

Let $S^{\top}$ and $T^{\top}$ have dimensions $D \times J$ and $D \times K$ respectively, and let $X$ and $Y$ be the subspaces spanned by the columns of $S^{\top}$ and $T^{\top}$ respectively.
Without loss of generality, assume that $J \le K$. Let $\sigma_1 \ge \cdots \ge \sigma_J$ be the singular values of $ST^\top $ with corresponding left singular vectors $u_1, \ldots, u_J$ and right singular vectors $v_1, \ldots, v_J$. Let $x_j = S^{\top}u_j$ and let $y_j = T^{\top}v_j$ for $1 \le j \le J$. By definition, we can write
\begin{equation*}
  \sigma_j = \max_{u,v} \{ u^\top S T^{\top}v \,|\, u \perp \vspan(u_1, \ldots, u_{j-1}), v \perp \vspan(v_1, \ldots, v_{j-1}), \|u\| = 1, \|v\|=1 \} .
\end{equation*}
Since the $\{u_j\}_j$ are orthonormal, so are the $\{x_j\}_j$. Similarly, since the $\{v_j\}_j$ are orthonormal, so are the $\{y_j\}_j$. Suppose that all of the singular values of $S T^{\top}$ equal one. Then we must have $x_j = y_j$ for each $j$, which implies that $X \subseteq Y$, and so $c_F(X,Y) = 0$.

Now suppose that $\sigma_1 = \cdots = \sigma_{\ell} = 1$, and $\sigma_{\ell+1} \ne 1$. It follows that
\begin{equation*}
  X \cap Y = \vspan(x_1, \ldots, x_{\ell}) = \vspan(y_1, \ldots, y_{\ell}) ,
\end{equation*}
and so
\begin{align*}
  \sigma_{\ell+1} & = \sup_{u,v} \{ u^\top S T^{\top}v \,|\, u \in \vspan(u_1, \ldots, u_\ell)^{\perp}, v \in \vspan(v_1, \ldots, v_{\ell})^{\perp}, \|u\| = 1, \|v\| = 1 \} \\
  & = \sup_{x,y} \{ x^\top y \,|\, x \in X \cap (X \cap Y)^{\perp}, y \in Y \cap (X \cap Y)^{\perp}, \|x\| = 1, \|y\| = 1 \} \\
  & = c_F(X,Y) .
\end{align*}

\subsection{Cheeger's Inequality}  \label{sec:cheeger_inequality_appendix}

For an overview of spectral graph theory, see \citet{Chung1997Spectral}. We state Cheeger's inequality below.

Let $G$ be a weighted, connected graph with vertex set $V_G$ and edge weights $(w_{ij})_{i,j \in V_G}$. Define the weighted degree of a vertex $i$ to be $\delta_i = \sum_{j \ne i} w_{ij}$, define the volume of a subset of vertices to be the sum of their weighted degrees, $\vol(U) = \sum_{i \in U} \delta_i$, and define the size of the cut between $U$ and its complement $U^c$ to be the sum of the weights of the edges between $U$ and $U^c$,
\begin{equation*}
  |E(U,U^c)| = \sum_{i \in U, j \in U^c} w_{ij}.
\end{equation*}
The Cheeger constant is defined as
\begin{equation*}
  h_G = \min_{ \emptyset \ne U \subsetneq V_G} \frac{|E(U,U^c)|}{\min(\vol(U),\vol(U^c))} .
\end{equation*}
Let $L$ be the unnormalized Laplacian of $G$, i.e.~the $|V_G| \times |V_G|$ matrix whose entries are defined by
\begin{equation*}
  L_{ij} = \left\{ \begin{array}{ll} -w_{ij} & i \ne j \\ \delta_i & \text{otherwise} \end{array} \right. .
\end{equation*}
Let $D$ be the $|V_G| \times |V_G|$ diagonal matrix defined by $D_{ii} = \delta_i$. Then $\mathcal L = D^{-1/2}LD^{-1/2}$ is the normalized Laplacian. Let $\lambda_2(\mathcal L)$ denote the second smallest eigenvalue of $\mathcal L$ (since $G$ is connected, there will be exactly one eigenvalue equal to zero). 

\begin{theorem}[Cheeger's inequality] \label{thm:cheeger_inequality}
We have $\lambda_2(\mathcal L) \ge \frac{h_G^2}{2}$.
\end{theorem}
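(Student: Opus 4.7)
The plan is to prove Cheeger's inequality by the classical two-step argument: first rewrite $\lambda_2(\mathcal{L})$ as a combinatorial Rayleigh quotient, then extract a good cut from the minimizing vector via a sweep (threshold-cut) construction. Concretely, since $G$ is connected, $D^{1/2}\mathbf{1}$ is an eigenvector of $\mathcal{L}$ with eigenvalue $0$, so the Courant--Fischer theorem gives
\begin{equation*}
\lambda_2(\mathcal{L}) = \min_{\,f \perp D^{1/2}\mathbf{1},\,f\neq 0\,} \frac{f^\top \mathcal{L} f}{f^\top f}.
\end{equation*}
Substituting $f = D^{1/2} g$ converts this into
\begin{equation*}
\lambda_2(\mathcal{L}) = \min_{\,g\perp D\mathbf{1},\, g\neq 0\,} \frac{\sum_{i<j} w_{ij}(g_i-g_j)^2}{\sum_i \delta_i g_i^2} \;=:\; \min_g R(g),
\end{equation*}
the standard Dirichlet form over the weighted graph. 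Let $g^*$ achieve the minimum. I would then reduce to a nonnegative ``half-volume'' vector: choose $c$ so that $U_+ = \{i : g^*_i > c\}$ and $U_- = \{i : g^*_i < c\}$ each have volume at most $\vol(V_G)/2$ (one can always take $c$ to be a median), and form $h_+ = (g^* - c)_+$ and $h_- = (c - g^*)_+$. A short calculation using the convexity of $(\cdot)^2$ on the numerator and the decomposition of the denominator shows $\min\{R(h_+), R(h_-)\} \le R(g^*) = \lambda_2(\mathcal{L})$. Replacing $g^*$ by whichever of $h_+,h_-$ is better, I may assume $g \ge 0$, $\mathrm{supp}(g) \subseteq U$ with $\vol(U) \le \vol(V_G)/2$, and $R(g) \le \lambda_2(\mathcal{L})$.

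Next I would carry out the sweep-cut analysis. For $t \ge 0$ let $U_t = \{i : g_i^2 > t\}$; each $U_t$ is a nonempty subset of $U$, hence $\vol(U_t) \le \vol(V_G)/2$. Using the layer-cake identity $|g_i^2 - g_j^2| = \int_0^\infty \mathbf{1}\{t\in[g_i^2 \wedge g_j^2,\,g_i^2 \vee g_j^2]\}\,dt$ and $g_i^2 = \int_0^\infty \mathbf{1}\{t < g_i^2\}\,dt$, one gets
\begin{equation*}
\sum_{i<j} w_{ij}\, |g_i^2 - g_j^2| \;=\; \int_0^\infty |E(U_t, U_t^c)|\,dt, \qquad \sum_i \delta_i g_i^2 \;=\; \int_0^\infty \vol(U_t)\,dt.
\end{equation*}
If every $U_t$ had $|E(U_t,U_t^c)| > h_G \vol(U_t)$ we would contradict the definition of $h_G$, so by an averaging/pigeonhole argument on the two integrals there exists $t^\star$ with
\begin{equation*}
\frac{|E(U_{t^\star}, U_{t^\star}^c)|}{\vol(U_{t^\star})} \;\le\; \frac{\sum_{i<j} w_{ij}|g_i^2-g_j^2|}{\sum_i \delta_i g_i^2}.
\end{equation*}
The left-hand side is at least $h_G$ by definition.

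The main obstacle, and the crux of the proof, is to bound the numerator on the right by $\sqrt{2 R(g)}\cdot \sum_i \delta_i g_i^2$. I would do this by Cauchy--Schwarz applied to the factorization $|g_i^2-g_j^2| = |g_i-g_j|\cdot(g_i+g_j)$:
\begin{equation*}
\sum_{i<j} w_{ij}|g_i^2 - g_j^2| \;\le\; \Bigl(\sum_{i<j} w_{ij}(g_i-g_j)^2\Bigr)^{1/2} \Bigl(\sum_{i<j} w_{ij}(g_i+g_j)^2\Bigr)^{1/2},
\end{equation*}
and then use $(g_i+g_j)^2 \le 2(g_i^2 + g_j^2)$ to bound the second factor by $\sqrt{2\sum_i \delta_i g_i^2}$. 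Combining everything gives
\begin{equation*}
h_G \;\le\; \sqrt{2 R(g)} \;\le\; \sqrt{2\lambda_2(\mathcal{L})},
\end{equation*}
which rearranges to $\lambda_2(\mathcal{L}) \ge h_G^2/2$. The delicate point is ensuring the half-volume reduction is lossless in the Rayleigh quotient; once that is set up the Cauchy--Schwarz plus layer-cake calculation is essentially forced.
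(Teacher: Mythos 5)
Your proof is correct and is the standard argument for the hard direction of Cheeger's inequality (Rayleigh quotient in the form $\min_{g\perp D\mathbf{1}} \sum w_{ij}(g_i-g_j)^2/\sum\delta_i g_i^2$, reduction to a nonnegative vector supported on a half-volume set, layer-cake plus averaging over sweep cuts, and Cauchy--Schwarz with $(g_i+g_j)^2\le 2(g_i^2+g_j^2)$). The paper does not prove this theorem at all---it states it for convenience and cites Chung's monograph---and your argument is essentially the proof found there, so there is nothing to compare beyond noting that you have supplied what the paper outsources. Two small expository points: the clause ``if every $U_t$ had $|E(U_t,U_t^c)|>h_G\vol(U_t)$ we would contradict the definition of $h_G$'' has the inequality pointing the wrong way (every cut ratio is \emph{at least} $h_G$ by definition; the averaging step is what produces a $t^\star$ whose ratio is at most the quotient of integrals, and these two facts together give $h_G\le\sqrt{2R(g)}$), and in the half-volume reduction one of $h_+,h_-$ may be identically zero, in which case you simply take the other; neither issue affects the validity of the proof.
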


\subsection{Proof of Lemma~\ref{lem:bound_Cheeger}} \label{sec:proof_of_lem_bound_cheeger}

\begin{proof}
We have
\begin{align*}
  \min(\vol(U), \vol(U^c)) & \le \frac12 \vol(V_G) \\
  & = \frac12 \sum_{(r,m)} \left(\sum_{(r',m') \ne (r,m)} |A_{rm} \cap A_{r'm'}| \right)\\
  & = \frac12 \sum_{(r,m)} (R-1) |A_{rm}| \\
  & = \frac12 NR(R-1) .
\end{align*}
Since $G$ is connected, for any nonempty set $U \subsetneq V_G$, there must be some element $v \in V$ (here $V$ is the ground set of our submodular function $F$, not the set of vertices $V_G$) such that $v \in A_{r_1m_1} \cap A_{r_2m_2}$ for some $(r_1,m_1) \in U$ and $(r_2,m_2) \in U^c$. Suppose that $v$ appears in $k$ of the subsets of $V$ indexed by elements of $U$ and in $R-k$ of the subsets of $V$ indexed by elements of $U^c$. Then
\begin{equation*}
  |E(U,U^c)| \ge k(R-k) \ge R-1 .
\end{equation*}
It follows that
\begin{equation*}
  h_G \ge \frac{R-1}{\frac12 NR(R-1)} = \frac{2}{NR} .
\end{equation*}
\end{proof}
It follows from Theorem~\ref{thm:cheeger_inequality} that $\lambda_2(\mathcal L) \ge \frac{2}{N^2R^2}$.

\section{Results for the Lower Bound} \label{app:lower_bound}

\subsection{Some Helpful Results} \label{sec:lower_bound_results}

In Lemma~\ref{lem:lower_bound_vector_spaces}, we show how AP between subspaces $U$ and $V$ can be initialized to exactly achieve the worst-case rate of convergence. 
Then in Corollary~\ref{cor:exact_rate_when_same_as_vspace_near_origin}, we show that if subsets $U'$ and $V'$ look like subspaces $U$ and $V$ near the origin,  we can initialize AP between $U'$ and $V'$ to achieve the same worst-case rate of convergence.

\begin{lemma} \label{lem:lower_bound_vector_spaces}
  Let $U$ and $V$ be subspaces with $U \not\subseteq V$ and $V \not\subseteq U$. Then there exists some nonzero point $u_0 \in U \cap (U \cap V)^{\perp}$ such that when we initialize AP at $u_0$, the resulting sequences $\{u_k\}_{k \ge 0}$ and $\{v_k\}_{k \ge 0}$ satisfy
\begin{align*}
  \|u_k\| & = c_F(U,V)^{2k}\|u_0\| \\
  \|v_k\| & = c_F(U,V)^{2k}\|v_0\| .
\end{align*}
\end{lemma}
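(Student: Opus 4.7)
The plan is to construct $u_0$ as a top eigenvector of the operator $T = \Pi_U \Pi_V$ restricted to the subspace $W = U \cap (U \cap V)^\perp$, with eigenvalue $c_F(U,V)^2$. Once this is done, a single AP iteration will scale $u_0$ by exactly $c_F(U,V)^2$, and the claimed equalities follow by induction. Note that $W \ne \{0\}$ because $U \not\subseteq V$.

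First I would verify that $T$ preserves $W$ and that $T|_W$ is self-adjoint. For $u \in W$ and any $w \in U \cap V$, self-adjointness of $\Pi_V$ gives $w^\top (\Pi_V u) = (\Pi_V w)^\top u = w^\top u = 0$, so $\Pi_V u \in V \cap (U \cap V)^\perp$. The same argument with $\Pi_V$ replaced by $\Pi_U$ shows that $\Pi_U$ preserves $(U \cap V)^\perp$, hence $T u \in U \cap (U \cap V)^\perp = W$. For $u_1, u_2 \in W$, self-adjointness of $\Pi_V$ and the identity $\Pi_U u_i = u_i$ give $(T u_1)^\top u_2 = (\Pi_V u_1)^\top u_2 = u_1^\top (\Pi_V u_2) = u_1^\top (T u_2)$, so $T|_W$ is self-adjoint.

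Next I would identify the largest eigenvalue of $T|_W$ with $c_F(U,V)^2$. For unit $u \in W$, we have $u^\top T u = u^\top \Pi_V u = \|\Pi_V u\|^2$. On the other hand, maximizing the inner product $u^\top v$ in the definition \eqref{eq:friedrichs_angle} over $v$ with $u$ held fixed yields $\sup_{u \in W,\, \|u\| = 1} \|\Pi_V u\| = c_F(U,V)$, since $\Pi_V u \in V \cap (U \cap V)^\perp$ (as shown above). Because the ambient space is finite-dimensional, the supremum is attained by some unit eigenvector $u_0 \in W$ of $T|_W$ with $T u_0 = c_F(U,V)^2 u_0$.

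Finally, a one-step induction finishes the argument. Set $v_0 = \Pi_V u_0$, so $\|v_0\| = c_F(U,V)\|u_0\|$. If $u_k = c_F(U,V)^{2k} u_0$, then $v_k = \Pi_V u_k = c_F(U,V)^{2k} v_0$ and $u_{k+1} = \Pi_U v_k = c_F(U,V)^{2k} T u_0 = c_F(U,V)^{2(k+1)} u_0$. Taking norms gives the two displayed equalities. The main obstacle is the linear-algebraic setup in the first two steps, in particular checking that $T$ leaves $W$ invariant and is self-adjoint there; once that is done the identification of the top eigenvalue with $c_F(U,V)^2$ and the inductive step are routine.
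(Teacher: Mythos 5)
Your proposal is correct, and it reaches the conclusion by a genuinely different mechanism than the paper. You prove that the two-step AP operator $T=\Pi_U\Pi_V$ maps $W=U\cap(U\cap V)^{\perp}$ into itself, is self-adjoint there, and has top eigenvalue $c_F(U,V)^2$ realized by the Rayleigh quotient $\|\Pi_V u\|^2$; choosing $u_0$ as a corresponding eigenvector makes every iterate an exact scalar multiple of $u_0$ (resp.\ $v_0$), and the norm identities follow by a one-line induction. All the linear-algebraic steps check out, including the subtle point that for $u\in W$ one has $\Pi_V u\in V\cap(U\cap V)^{\perp}$, so the inner supremum in the definition of $c_F$ equals $\|\Pi_V u\|$. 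The paper instead picks $u_0=u_*$ as a maximizing pair for the Friedrichs angle (by compactness), notes via Lemma~\ref{lem:kappa_bound_in_vspace_case} that this forces $\kappa(u_0)$ to equal its maximal value $(1-c_F(U,V)^2)^{-1/2}$, and then invokes the monotonicity of $\kappa$ along AP iterates (Lemma~\ref{lem:kappa_increasing}) to conclude that $\|\Pi_V u_k\|=c_F(U,V)\|u_k\|$ at every step. The two constructions produce essentially the same initialization (a Friedrichs-angle maximizer is an eigenvector of $T|_W$), but the propagation arguments differ: the paper's is shorter because it reuses machinery already built for the upper bound, while yours is self-contained spectral linear algebra and yields the slightly stronger conclusion that the iterates remain on the fixed lines spanned by $u_0$ and $v_0$, not merely that their norms decay geometrically.
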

\begin{proof}
Find $u_* \in U \cap (U \cap V)^{\perp}$ and $v_* \in V \cap (U \cap V)^{\perp}$ with $\|u_*\| = 1$ and $\|v_*\| = 1$ such that $u_*^\top v_* = c_F(U,V)$, which we can do by compactness. By Lemma~\ref{lem:kappa_bound_in_vspace_case}\ref{lem:kappa_bound_in_vspace_case:a},
\begin{equation*}
  c_F(U,V) = v_*^\top u_* = v_*^\top \Pi_Vu_* \le \|\Pi_Vu_*\| \le c_F(U,V).
\end{equation*}
Set $u_0 = u_*$ and generate the sequences $\{u_k\}_{k \ge 0}$ and $\{v_k\}_{k \ge 0}$ via AP. Since $\|\Pi_Vu_0\| = c_F(U,V)$, Lemma~\ref{lem:kappa_bound_in_vspace_case}\ref{lem:kappa_bound_in_vspace_case:c} implies that $\kappa(u_0) = (1-c_F(U,V)^2)^{-1/2}$. Since $\kappa$ attains its maximum at $u_0$, Lemma~\ref{lem:kappa_increasing} implies that $\kappa$ attains the same value at every element of the sequences $\{u_k\}_{k \ge 0}$ and $\{v_k\}_{k \ge 0}$. Therefore, Lemma~\ref{lem:kappa_bound_in_vspace_case}\ref{lem:kappa_bound_in_vspace_case:c} implies that $\|\Pi_Vu_k\| = c_F(U,V) \|u_k\|$ and $\|\Pi_Uv_k\| = c_F(U,V)\|v_k\|$ for all $k$. This proves the lemma.
\end{proof}

\begin{corollary} \label{cor:exact_rate_when_same_as_vspace_near_origin}
  Let $U$ and $V$ be subspaces with $U \not\subseteq V$ and $V \not\subseteq U$. Let $U' \subseteq U$ and $V' \subseteq V$ be subsets such that $U' \cap B_{\epsilon}(0) = U \cap B_{\epsilon}(0)$ and $V' \cap B_{\epsilon}(0) = V \cap B_{\epsilon}(0)$ for some $\epsilon > 0$. Then there is a point $u_0' \in U'$ such that the sequences $\{u_k'\}_{k \ge 0}$ and $\{v_k'\}_{k \ge 0}$ generated by AP between $U'$ and $V'$ initialized at $u_0'$ satisfy
\begin{align*}
  \|u_k'\| & = c_F(U,V)^{2k}\|u_0'\| \\
  \|v_k'\| & = c_F(U,V)^{2k}\|v_0'\| .
\end{align*}
\end{corollary}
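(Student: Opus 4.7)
The plan is to reduce the corollary to Lemma~\ref{lem:lower_bound_vector_spaces} by a scaling argument: pick the $u_0$ produced by that lemma, shrink it so that the entire AP trajectory in the subspace setting stays inside the ball $B_\epsilon(0)$, and then argue that inside this ball AP between the subsets $U'$ and $V'$ is indistinguishable from AP between the subspaces $U$ and $V$.

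First I would invoke Lemma~\ref{lem:lower_bound_vector_spaces} to obtain a nonzero $u_0 \in U \cap (U \cap V)^\perp$ such that AP between the subspaces $U$ and $V$, started at $u_0$, produces sequences with $\|u_k\| = c_F(U,V)^{2k}\|u_0\|$ and $\|v_k\| = c_F(U,V)^{2k}\|v_0\|$. Choose any $t$ with $0 < t < \epsilon / \|u_0\|$ and set $u_0' = t u_0$. Because projections onto subspaces are linear, AP between $U$ and $V$ initialized at $u_0'$ just produces the rescaled sequence $t u_k, t v_k$. Since $c_F(U,V) \le 1$, every iterate in this rescaled sequence has norm at most $\|u_0'\| < \epsilon$, so the entire trajectory lies in $B_\epsilon(0)$.

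Next I would show that on $B_\epsilon(0)$, projection onto $U'$ coincides with projection onto $U$, and likewise for $V'$ and $V$. For any $x \in B_\epsilon(0)$, the point $\Pi_U x$ lies in $U$, and since $0 \in U$ and projection onto a subspace is nonexpansive with $\Pi_U 0 = 0$, we have $\|\Pi_U x\| \le \|x\| < \epsilon$, so $\Pi_U x \in U \cap B_\epsilon(0) = U' \cap B_\epsilon(0) \subseteq U'$. Because $U' \subseteq U$, any candidate in $U'$ is also a candidate in $U$, so $\Pi_U x$ must in fact be $\Pi_{U'} x$. The same argument works for $V$ and $V'$. Therefore, running AP between $U'$ and $V'$ from $u_0' \in B_\epsilon(0)$ reproduces, step by step, the subspace AP sequence $tu_k, tv_k$, since every iterate we just produced lies in $B_\epsilon(0)$.

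Finally I would read off the conclusion: $\|u_k'\| = t \|u_k\| = t c_F(U,V)^{2k} \|u_0\| = c_F(U,V)^{2k} \|u_0'\|$, and the analogous identity for $v_k'$. The only step that requires any care is the identification of projections inside $B_\epsilon(0)$, and the key observation that makes it work is that projection onto a subspace through the origin cannot increase norms, so a trajectory that starts deep inside the ball never escapes it.
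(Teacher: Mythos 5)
Your proof is correct and follows essentially the same route as the paper's (which simply invokes Lemma~\ref{lem:lower_bound_vector_spaces} and sets $u_0' = \frac{\epsilon}{\|u_0\|}u_0$); you merely spell out the details the paper leaves implicit, namely that linearity of subspace projections rescales the whole trajectory, that $c_F(U,V)\le 1$ and nonexpansiveness keep every iterate inside $B_\epsilon(0)$, and that $\Pi_{U'}$ and $\Pi_U$ agree there. If anything, your choice of $t$ strictly less than $\epsilon/\|u_0\|$ is slightly more careful than the paper's, which places $u_0'$ on the boundary of the ball.
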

\begin{proof}
  Use Lemma~\ref{lem:lower_bound_vector_spaces} to choose some nonzero $u_0 \in U \cap (U \cap V)^{\perp}$ satisfying this property. Then set $u_0' = \frac{\epsilon}{\|u_0\|}u_0$.
\end{proof}

\subsection{Proof of Lemma~\ref{lem:lower_bound_example}} \label{sec:proof_of_lower_bound_example}
Observe that we can write
\begin{equation*}
  \aff(\mathcal B^{\text{lb}}) = \{ (s_1,-s_1, \ldots , s_{\frac{N}{2}},-s_{\frac{N}{2}},-t_{\frac{N}{2}},t_1,-t_1,\ldots,t_{\frac{N}{2}},0,\ldots, 0,\ldots,0,\ldots,0) \,|\, s_i,t_j \in \mathbb R \} .
\end{equation*}
We can write $\aff(\mathcal B^{\text{lb}})$ as the nullspace of the matrix
\begin{equation*}
T_{\text{lb}} = \left(\begin{array}{ccccc} T_{\text{lb},1} & & & & \\ & T_{\text{lb},2} & & & \\ & & I_N & & \\ & & & \ddots & \\ & & & & I_N \end{array} \right),
\end{equation*}
where the $N \times N$ identity matrix $I_N$ is repeated $R-2$ times and where $T_{\text{lb},1}$ and $T_{\text{lb},2}$ are the $\frac{N}{2} \times N$ matrices
\begin{equation*}
  T_{\text{lb},1} = \frac{1}{\sqrt{2}}\left( \begin{array}{ccccccc}
    1 & 1 & & & & & \\
    & & 1 & 1 & & &\\
    & & & & \ddots & & \\
    & & & & & 1 & 1 
  \end{array}\right) \\
  \quad\quad
  T_{\text{lb},2} = \frac{1}{\sqrt{2}}\left( \begin{array}{ccccccc}
    & 1 & 1 & & & & \\
    & & & 1 & 1 & &\\
    & & & & & \ddots & \\
    1 & & & & & & 1 
  \end{array}\right) .
\end{equation*}
Recall that we can write $\mathcal A$ as the nullspace of the matrix $S$ defined in Equation~\eqref{eq:def_matrix_S}. It follows from Lemma~\ref{lem:angles_and_singular_values} that $c_F(\mathcal A,\aff(\mathcal B^{\text{lb}}))$ equals the largest singular value of $ST_{\text{lb}}^\top $ that is less than one. We have
\begin{equation*}
  ST_{\text{lb}}^\top  = \tfrac{1}{\sqrt{R}}\left(\begin{array}{ccccc} T_{\text{lb},1}^\top  & T_{\text{lb},2}^\top  & I_N & \cdots & I_N \end{array} \right) .
\end{equation*}
We can permute the columns of $ST_{\text{lb}}^\top$ without changing the singular values, so $c_F(\mathcal A,\aff(\mathcal B^{\text{lb}}))$ equals the largest singular value of
\begin{equation*}
\tfrac{1}{\sqrt{R}}\left(\begin{array}{cccc} T_{\text{lb},0}^\top  & I_N & \cdots & I_N \end{array} \right), 
\end{equation*}
that is less than one, where $T_{\text{lb},0}$ is the $N \times N$ circulant matrix
\begin{equation*}
  T_{\text{lb},0} = \frac{1}{\sqrt{2}}\left(\begin{array}{cccccc}
    1 & 1 & & & & \\
     & 1 & 1 & & & \\
    & & & \ddots & & \\
    & & & & 1 & 1 \\
    1 & & & & & 1
  \end{array}\right) .
\end{equation*}
Therefore, $c_F(\mathcal A,\aff(\mathcal B^{\text{lb}}))^2$ equals the largest eigenvalue of
\begin{equation*}
  \tfrac{1}{R}\left(\begin{array}{cccc} T_{\text{lb},0}^\top  & I_N & \cdots & I_N \end{array} \right)\left(\begin{array}{cccc} T_{\text{lb},0}^\top  & I_N & \cdots & I_N \end{array} \right)^\top  = \tfrac{1}{R}\left(T_{\text{lb},0}^\top T_{\text{lb},0} + (R-2) I_N \right)
\end{equation*}
that is less than one. Therefore, it suffices to examine the $N \times N$ circulant matrix
\begin{equation*}
  T_{\text{lb},0}^\top T_{\text{lb},0} =
\frac12 \left( \begin{array}{ccccc}
  2 & 1 & & & 1 \\
  1 & 2 & & & \\
  & & \ddots & & \\
  & & & 2 & 1 \\
  1 & & & 1 & 2
\end{array} \right) .
\end{equation*}
The eigenvalues of  $T_{\text{lb},0}^\top T_{\text{lb},0}$ are given by $\lambda_j = 1 + \cos\left(\frac{2\pi j}{N} \right)$ for $0 \le j \le N-1$ (see \citet[Section~3.1]{Gray2006Toeplitz} for a derivation). Therefore,
\begin{equation*}
  c_F(\mathcal A,\aff(\mathcal B^{\text{lb}}))^2 =  1 - \tfrac{1}{R}(1 - \cos(\tfrac{2\pi}{N} ) ) .
\end{equation*}

\subsection{Lower Bound Illustration} \label{sec:lower_bound_plot}

\begin{figure}
  \centering
  \includegraphics[scale=0.9]{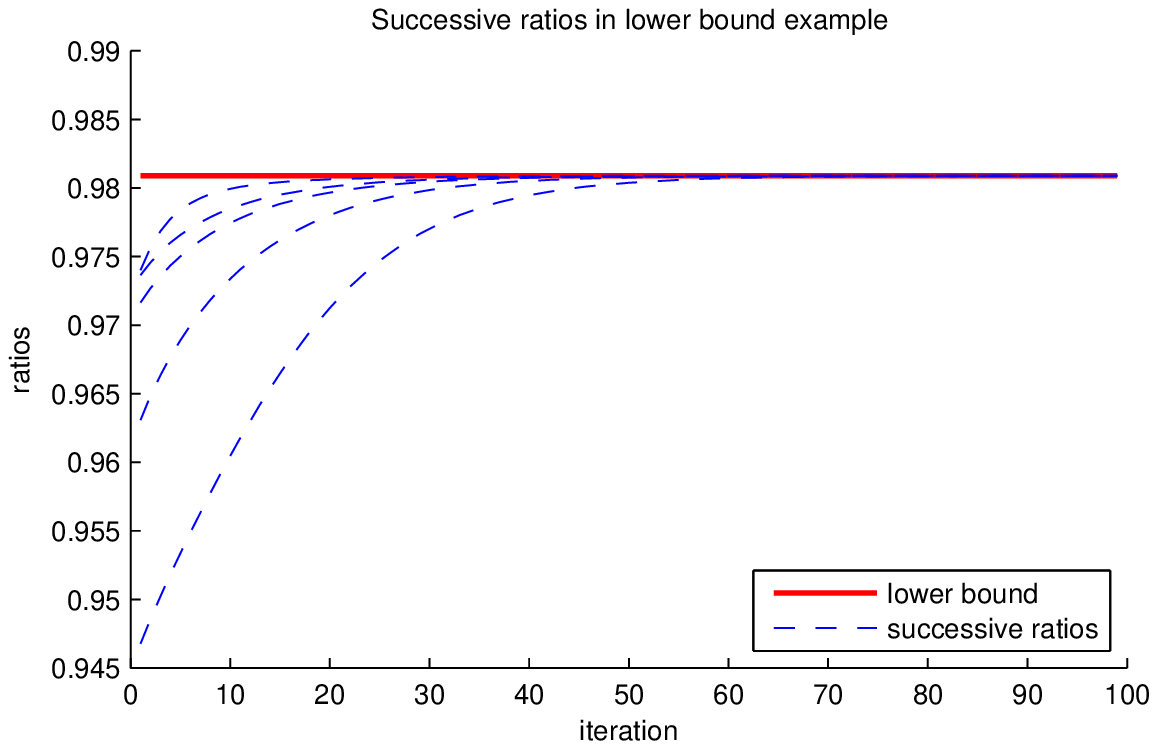}

  \caption{We run five trials of AP between $\mathcal A$ and $\mathcal B^{\text{lb}}$ with random initializations,  where $N = 10$ and $R = 10$. For each trial, we plot the ratios $d(a_{k+1},E)/d(a_k,E)$, where $E =\mathcal A \cap \mathcal B^{\text{lb}}$ is the optimal set. The red line shows the theoretical lower bound of $1 - \tfrac{1}{R}(1-\cos(\tfrac{2\pi}{N}))$ on the worst-case rate of convergence.}
  \label{fig:lower_bound_plot}
\end{figure}

The proof of Theorem~\ref{thm:lower_bound} shows that there is some $a_0 \in \mathcal A$ such that when we initialize AP between $\mathcal A$ and $\mathcal B^{\text{lb}}$ at $a_0$, we generate a sequence $\{a_k\}_{k \ge 0}$ satisfying
\begin{equation*}
  d(a_k,E) = (1 - \tfrac{1}{R}(1-\cos(\tfrac{2\pi}{N}))^kd(a_0,E) ,
\end{equation*}
where $E = \mathcal A \cap \mathcal B^{\text{lb}}$ is the optimal set.
In Figure~\ref{fig:lower_bound_plot}, we plot the theoretical bound in red, and in blue the successive ratios $d(a_{k+1},E)/d(a_k,E)$ for five runs of AP between $\mathcal A$ and $\mathcal B^{\text{lb}}$ with random initializations. 
Had we initialized AP at $a_0$, the successive ratios would exactly equal $1 - \tfrac{1}{R}(1-\cos(\tfrac{2\pi}{N}))$.
The plot of these ratios would coincide with the red line in Figure~\ref{fig:lower_bound_plot}.

Figure~\ref{fig:lower_bound_plot} illustrates that the empirical behavior of AP between $\mathcal A$ and $\mathcal B^{\text{lb}}$ is often similar to the worst-case behavior, even when the initialization is random. 
When we initialize AP randomly, the successive ratios appear to increase to the lower bound and then remain constant. 
Figure~\ref{fig:lower_bound_plot} shows the case $N=10$ and $R=10$, but the plot looks similar for other $N$ and $R$. 

We also note that the graph corresponding to our lower bound example actually achieves a Cheeger constant similar to the one used in Lemma~\ref{lem:bound_Cheeger}.

\section{Results for Convergence of the Primal and Discrete Problems}\label{app:convergence_discrete}

\subsection{Proof of Proposition~\ref{prop:primal_convergence}} \label{sec:proof_of_prop_primal_convergence}
First, suppose that $s \in B(F)$. Let $A = \{n \in V\,|\, s_n \ge 0\}$ be the set of indices on which $s$ is nonnegative. Then we have
\begin{equation} \label{eq:bound_norm_of_base_polytope}
  \|s\| \le \|s\|_1 = 2s(A) - s(V) \le 3F_{\max} .
\end{equation}

Recall that we defined $F_{\max} = \max_A|F(A)|$. 
Now, we show that $f(x_k) + \frac12\|x_k\|^2$ converges to $f(x_*) + \frac12\|x_*\|^2$ linearly with rate $1-\frac{1}{N^2R^2}$.
We will use Equation~\eqref{eq:bound_norm_of_base_polytope} to bound the norms of $x_k$ and $x_*$, both of which lie in $-B(F)$. 
We will also use the fact that $\|x_k-x_*\| \le \|\Gamma\|\|b_k - b_*\| \leq \sqrt{R}\|b_k-b_*\|$.
Finally, we  will use the proof of Theorem~\ref{thm:upper_bound} to bound $\|b_k-b_*\|$.
First, we bound the difference between the squared norms using convexity. We have
\begin{align}
  \nonumber
  \tfrac12\|x_k\|^2 - \tfrac12\|x_*\|^2 & \le x_k^{\top}(x_k-x_*) \\
  \nonumber
  & \le \|x_k\| \|x_k-x_*\| \\
  \nonumber
  & \le 3F_{\max} \sqrt{R} \|b_k-b_*\| \\
  \label{eq:bd1}
  & \le 6F_{\max}\sqrt{R} \|b_0-b_*\| (1 - \tfrac{1}{N^2R^2})^k .
\end{align}
Next, we bound the difference in Lov\'asz extensions. Choose $s \in \argmax_{s \in B(F)} s^\top x_k$. Then
\begin{align}
  \nonumber
  f(x_k)-f(x_*) & \le s^\top (x_k-x_*) \\
  \nonumber
  & \le \|s\| \|x_k-x_*\| \\
  \nonumber
  & \le 3F_{\max} \sqrt{R} \|b_k-b_*\| \\
  \label{eq:bd2}
  & \le 6F_{\max} \sqrt{R} \|b_0-b_*\|(1 - \tfrac{1}{N^2R^2})^k .
\end{align}
Combining the bounds \eqref{eq:bd1} and \eqref{eq:bd2}, we find that
\begin{equation}
  (f(x_k) + \tfrac12\|x_k\|^2) - (f(x_*) + \tfrac12\|x_*\|^2) \le 12F_{\max} \sqrt{R} \|b_0-b_*\|(1 - \tfrac{1}{N^2R^2})^k .
\end{equation}

\subsection{Proof of Theorem~\ref{thm:rate_original_discrete_problem}} \label{sec:proof_of_rate_original_discrete_problem}

We will make use of the following result, shown in \cite[Proposition~10.5]{Bach2013Learning} and stated below for convenience.
\begin{proposition}
  Let $(w,s) \in \mathbb R^N \times B(F)$ be a pair of primal-dual candidates for the minimization of $\frac12\|w\|^2 + f(w)$, with duality gap $\epsilon=\frac12\|w\|^2+f(w)+\frac12\|s\|^2$. Then if $A$ is the suplevel set of $w$ with smallest value of $F$, then
\begin{equation*}
  F(A) - s_-(V) \le \sqrt{N\epsilon/2} .
\end{equation*}
\end{proposition}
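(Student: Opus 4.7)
The plan is to reduce the discrete gap $F(A) - s_-(V)$ to the continuous gap $\epsilon$ via the integral representation of the Lov\'asz extension, and then extract the $\sqrt{N}$ factor from a Cauchy--Schwarz step.

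I would begin by rewriting the gap. Since $f(w) = \max_{s' \in B(F)} s'^{\top} w \ge s^{\top} w$, completing the square yields
\[
\epsilon \;=\; \tfrac12\|w\|^2 + f(w) + \tfrac12\|s\|^2 \;=\; \tfrac12\|w+s\|^2 + \bigl(f(w) - s^{\top} w\bigr),
\]
with both summands non-negative. Hence $\|w+s\|_2 \le \sqrt{2\epsilon}$ and, by $\|\cdot\|_1 \le \sqrt{N}\|\cdot\|_2$, we have $\|w+s\|_1 \le \sqrt{2N\epsilon}$. This is where the $\sqrt{N}$ and the $\sqrt{\epsilon/2}$ factors in the target bound will both originate.

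Next, I would introduce the non-negative submodular function $\tilde F(S) := F(S) - s(S) \ge 0$, which satisfies $\tilde F(\emptyset) = \tilde F(V) = 0$ (the latter because $s(V) = F(V)$ for $s \in B(F)$) and whose Lov\'asz extension is $\tilde f(w) = f(w) - s^{\top} w$. The standard integral representation then gives
\[
f(w) - s^{\top} w \;=\; \int_{-\infty}^{\infty} \tilde F(A_\alpha)\, d\alpha, \qquad A_\alpha := \{i : w_i > \alpha\},
\]
with compactly supported, non-negative integrand.

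The heart of the argument is to select a threshold $\alpha^\ast$ such that $F(A_{\alpha^\ast}) - s_-(V) \le \sqrt{N\epsilon/2}$; the conclusion will then follow because $A$, being the suplevel set of $w$ with smallest $F$-value, satisfies $F(A) \le F(A_{\alpha^\ast})$. I would decompose
\[
F(A_\alpha) - s_-(V) \;=\; \tilde F(A_\alpha) + \bigl(s(A_\alpha) - s_-(V)\bigr),
\]
control the second term using the explicit identity
\[
s(A_\alpha) - s_-(V) \;=\; \sum_{i:\,s_i>0,\,w_i>\alpha} s_i \;+\; \sum_{i:\,s_i<0,\,w_i\le\alpha} (-s_i),
\]
whose summands are each dominated by the ``sign-mismatched'' coordinates of $w+s$ (giving a bound by a piece of $\|w+s\|_1$), and control the first term $\tilde F(A_{\alpha^\ast})$ by averaging against the integral representation of $\tilde f(w) \le \epsilon$. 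The main obstacle is balancing these two contributions by a judicious choice of $\alpha^\ast$ (or by averaging $\alpha$ over a suitable distribution so that some $\alpha^\ast$ achieves the average) in such a way that the constants line up to give precisely $\sqrt{N\epsilon/2}$, rather than a looser multiple such as $\sqrt{2N\epsilon}$ that would come from picking $\alpha^\ast = 0$ naively.
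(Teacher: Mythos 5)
The paper itself does not prove this statement---it is quoted from \citet[Proposition~10.5]{Bach2013Learning}---so the comparison is with the standard argument there. Your setup is on the right track: the split $\epsilon = \tfrac12\|w+s\|^2 + (f(w)-s^\top w)$ into two nonnegative parts, the nonnegative function $\tilde F = F - s$, the level-set representation of $\tilde f$, and the decomposition $F(A_\alpha)-s_-(V) = \tilde F(A_\alpha) + \bigl(s(A_\alpha)-s_-(V)\bigr)$ are all correct and are genuine ingredients of the proof. But the step you defer as ``the main obstacle''---choosing $\alpha^\ast$ so that both terms are small simultaneously---is exactly where the content of the proposition lies, and the tools you propose do not carry it. First, the claimed domination of the modular mismatch term by coordinates of $w+s$ fails once $\alpha \ne 0$: if $s_i<0$ and $w_i=-s_i$ exactly, then for any $\alpha > w_i$ this coordinate contributes $-s_i>0$ to $s(A_\alpha)-s_-(V)$ while $|w_i+s_i|=0$. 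The correct per-coordinate bound carries an additive $|\alpha|$, with only the coordinates $\{i: s_i<0\}$ penalized for $\alpha>0$ and only $\{i: s_i\ge 0\}$ for $\alpha<0$. Conversely, at $\alpha=0$, where your $\ell_1$ domination does hold, the averaging argument gives you no control of $\tilde F(A_0)$. So the two bounds you intend to combine are never available at the same $\alpha$ without the $|\alpha|$ correction, and handling that correction is the whole game.

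Second, even granting a successful balance, the Cauchy--Schwarz step $\|w+s\|_1 \le \sqrt{2N\epsilon}$ cannot yield the constant $\sqrt{N\epsilon/2}$: that single term is already twice the target, and $\tilde F(A_{\alpha^\ast})\ge 0$ only adds to it. The sharp constant comes from keeping the quadratic term in level form rather than collapsing it into an $\ell_1$ norm: using $\tfrac12(w_i+s_i)^2 = \int_{-s_i}^{w_i}(\alpha+s_i)\,d\alpha$ one gets the exact identity $\epsilon = \int_{-\infty}^{\infty}\bigl[F(A_\alpha)+\alpha|A_\alpha| - (s+\alpha 1_V)_-(V)\bigr]\,d\alpha$ with nonnegative integrand $\mathrm{gap}(\alpha)$. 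Since $F(A_\alpha)-s_-(V) \le \mathrm{gap}(\alpha) + |\alpha|\,m$ for $\alpha>0$ and $\le \mathrm{gap}(\alpha)+|\alpha|\,(N-m)$ for $\alpha<0$, where $m=|\{i: s_i<0\}|$, averaging over $\alpha\in[-c,c]$ gives $\min_\alpha\,[F(A_\alpha)-s_-(V)] \le \tfrac{\epsilon}{2c} + \tfrac{Nc}{4}$, which equals $\sqrt{N\epsilon/2}$ at $c=\sqrt{2\epsilon/N}$; since $A$ minimizes $F$ over suplevel sets, the claim follows. Your outline would need to be reorganized along these lines (exact per-level gaps, one-sided $|\alpha|$ penalties, then averaging); as written, it both misstates the bound on the modular term away from $\alpha=0$ and loses a factor of $2$ at the Cauchy--Schwarz step.
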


Using this result in our setting, recall that by definition $A_k$ is the set of the form $\{n \in V \,|\, (x_k)_n \ge c\}$ for some constant $c$ with smallest value of $F(\{n \in V \,|\, (x_k)_n \ge c\})$.

Let $(w_*,s_*) \in \mathbb R^N \times B(F)$ be a primal-dual optimal pair for the left-hand version of Problem~\eqref{eq:prox}. The dual of this minimization problem is the projection problem $\min_{s \in B(F)} \frac12 \|s\|^2$.
From \cite[Proposition~10.5]{Bach2013Learning}, we see that
\begin{align*}
  F(A_k) - F(A_*) & \le F(A_k) - (s_*)_-(V) \\
  & \le \sqrt{\tfrac{N}{2} ( (f(x_k) + \tfrac12\|x_k\|^2) - (f(x_*) + \tfrac12\|x_*\|^2) ) } \\
  & \le \sqrt{6F_{\max} NR^{1/2} \|b_0-b_*\|}\, (1 - \tfrac{1}{N^2R^2})^{k/2} \\
  & \le \sqrt{6F_{\max} NR^{1/2} \|b_0-b_*\|} (1 - \tfrac{1}{2N^2R^2})^{k},
\end{align*}
where the third inequality uses the proof of Proposition~\ref{prop:primal_convergence}.
The second inequality 
relies on \citet[Proposition~10.5]{Bach2013Learning}, which states that a duality gap of $\epsilon$ for the left-hand version of Problem~\eqref{eq:prox} turns into a duality gap of $\sqrt{N\epsilon/2}$ for the original discrete problem. If our algorithm converged with rate $\frac{1}{k}$, this would translate to a rate of $\frac{1}{\sqrt{k}}$ for the discrete problem. But fortunately, our algorithm converges linearly, and taking a square root preserves linear convergence.

\subsection{Running times}
Theorem~\ref{thm:rate_original_discrete_problem} implies that the number of iterations required for an accuracy of $\epsilon$ is at most
\begin{equation}\label{eq:time}
  2N^2R^2 \log\left( \frac{\sqrt{6F_{\max} N R^{1/2} \|b_0-b_*\|}}{\epsilon} \right) .
\end{equation}
Each iteration involves minimizing each of the $F_r$ separately. 
For comparison, the number of iterations required in \citet{stobbe10efficient} is
\begin{equation*}
  24 \sqrt{NR} \frac{F_{\max}  }{\epsilon} .
\end{equation*}
The dependence of this algorithm on $N$ and $R$ is better, but its dependence on $F_{\max}/\epsilon$ is worse. For example, to obtain the exact discrete solution, we need $\epsilon < \min_{S,T} | F(S) - F(T)|$. This is one for integer-valued functions (in which case the lower rate may be desirable), but can otherwise become very small. The constant $F_{\max}$ can be of order $O(N)$ in general (or even larger if the function becomes very negative). For empirical comparisons, we refer the reader to \cite{Jegelka2013Reflection}.

The running times of the combinatorial algorithm by
\citet{kolmogorov12minimizing} apply to \emph{integer-valued} functions (as opposed to the generic ones above) and  range from $O((N+R)^2\log F_{\max})$ for cuts to $O((N+Q^2R)(N + Q^2R + QR\tau_2)\log F_{\max})$, where $Q \leq N$ is the maximal cardinality of the support of any $F_r$, and $\tau_2$ is the time required to minimize a simple function.
This is better than \eqref{eq:time} if $Q$ is a small constant, and worse as $Q$ gets closer to $N$.

For comparison, if not exploiting decomposition, one may use combinatorial algorithms, the Frank-Wolfe algorithm (conditional gradient descent), or a subgradient method.
The combinatorial algorithm by \citet{orlin09sfm} has a running time of $O(N^5\tau_1 + N^6)$, and the algorithm by \citet{iwata03scaling} (for integer-valued functions) has a running time of $O((N^4\tau_1 + N^5)\log F_{\max})$, where $\tau_1$ is the time required to evaluate $F$. 
For an accuracy of $\epsilon$ in the discrete objective, Frank-Wolfe will take $64N \frac{F_{\max}}{\epsilon^2}$ iterations, each taking time $O(N \log N)$. The subgradient method behaves similarly.
\end{document}